\documentclass[11pt]{article}
\usepackage{graphicx} 
\usepackage{amsmath, amsthm, amsfonts, mathtools, physics}
\mathtoolsset{showonlyrefs} 
\usepackage{comment}
\usepackage{fullpage}
\usepackage{caption}
\captionsetup[figure]{justification=centering}
\usepackage{subcaption}
\captionsetup[subfigure]{font=small, aboveskip=-1pt}

\usepackage{amssymb}
\usepackage{tikz}
\usetikzlibrary{matrix,backgrounds}
\tikzset{classification/.style={thick,yscale=.8,font=\small}}

\usepackage{float}
\usepackage{enumerate}

\theoremstyle{definition}

\newtheorem*{dfn*}{Definition}
\newtheorem{thm}{Theorem}
\newtheorem*{thm*}{Theorem}

\newtheorem{lemma*}{Lemma}

\newtheorem{prop}{Proposition}

\usepackage{tikz}
\usetikzlibrary{matrix,backgrounds}
\tikzset{classification/.style={thick,x=1.0cm,y=1.0cm,font=\small}}

\usepackage{physics,mathtools}
\usepackage{thmtools,thm-restate}
\newcommand{\OPT}{\mathrm{OPT}}

\DeclareMathOperator*{\argmin}{arg\,min}

\usepackage{authblk}
\title{The Last Success Problem with Samples}
\author[1]{Toru Yoshinaga\thanks{yoshinaga-toru106@g.ecc.u-tokyo.ac.jp}}
\author[1]{Yasushi Kawase\thanks{kawase@mist.i.u-tokyo.ac.jp}}
\affil{The University of Tokyo, Japan}
\date{}

\begin{document}
\maketitle
\begin{abstract}
The last success problem is an optimal stopping problem that aims to maximize the probability of stopping on the last success in a sequence of independent $n$ Bernoulli trials. In the classical setting where complete information about the distributions is available, Bruss~\cite{B00} provided an optimal stopping policy that ensures a winning probability of $1/e$. However, assuming complete knowledge of the distributions is unrealistic in many practical applications. This paper investigates a variant of the last success problem where samples from each distribution are available instead of complete knowledge of them. When a single sample from each distribution is allowed, we provide a deterministic policy that guarantees a winning probability of $1/4$. This is best possible by the upper bound provided by Nuti and Vondr\'{a}k~\cite{NV23}. Furthermore, for any positive constant $\epsilon$, we show that a constant number of samples from each distribution is sufficient to guarantee a winning probability of $1/e-\epsilon$.
\end{abstract}

\section{Introduction}
Imagine you are driving down a street toward a movie theater, hoping to park in a space near the destination. You cannot know in advance which parking spaces are free. Each time you encounter an available space, you must decide whether to park there or continue driving. Your goal is to maximize the probability of parking in the available space nearest to the theater without going back. This scenario is studied as \emph{the last success problem} in the literature on optimal stopping theory~\cite{D70,T82}.

The last success problem is an optimal stopping problem that aims to maximize the probability of stopping on the last \emph{success} in a sequence of independent $n$ Bernoulli trials, where the success probability of $i$th trial is known to be $p_i$ for each $i\in\{1,2,\dots,n\}$. The event of stopping on the last success is referred to as a \emph{win}, whereas the other event is referred to as a \emph{loss}.

This problem is a generalization of various optimal stopping problems, including the classical secretary problem, and has a wide range of applications such as parking~\cite{T82}, maintenance planning for production equipment~\cite{ILT07,LTI08}, and ethical choices for clinical trials~\cite{B18}. The last success problem was originally studied by Hill and Krengel~\cite{HK92}. Subsequently, Bruss~\cite{B00} provided an optimal stopping policy for this problem. Bruss's policy is to stop at the first success from the index $\tau=\min\big\{t\in\{1,2,\dots,n\}\mid \sum_{i=t+1}^n r_i<1\big\}$, where $r_i$ denotes the \emph{odds} defined as $p_i/(1-p_i)$. Here, we treat $r_i$ as $\infty$ if $p_i=1$. In words, this policy stops at the first observed success within the range where the sum of the odds of future successes is less than one. Bruss's policy guarantees a winning probability of at least $1/e$ if the sum of the odds is at least one, i.e., $\sum_{i=1}^nr_i\ge 1$, and this lower bound is shown to be optimal~\cite{B03}. Note that an assumption about the sum of the odds is needed to guarantee a winning probability. Without such an assumption, there can be a situation where no successes occur (i.e., $p_1=\dots=p_n=0$), leading to an inevitable loss of any policy.

However, the assumption that the success probabilities are completely known is too restrictive in certain cases. For example, in the parking problem, it seems unnatural to know the probability of each parking place being available. Nevertheless, if you have previously driven to the movie theater, you would be aware of whether each parking space was available at that time. To model such a situation, it is appropriate to assume unknown distributions along with finite data obtained from them. To address real-world applications, optimal stopping problems under limited information have recently been extensively studied~\cite{AMM21,AFGS22,AKW14,CCES24,CDFKZ20,CDFS19,DFLLR21,DK19,FHL21,GKSW24,KNR20,KNR22,MST21}.

In this paper, we investigate the last success problem in the sample model where the decision maker has only $m$ samples for each (unknown) distribution. Very little is known about the problem of this variant, excluding the fact that no policy achieves a winning probability strictly better than $1/4$ in the single sample case where $m=1$.

\begin{thm}[Nuti and Vondr\'{a}k~\cite{NV23}]\label{thm:NV}
For any positive real $\epsilon$, there does not exist a (randomized) stopping policy for the single sample last success problem that guarantees a winning probability of $1/4 + \epsilon$ even when a success occurs with probability one.
\end{thm}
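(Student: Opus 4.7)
The plan is to invoke Yao's minimax principle: since the theorem forbids even randomized policies, it suffices to exhibit a prior distribution $\mathcal{D}$ over instances such that (i) every instance in the support of $\mathcal{D}$ guarantees a success almost surely, and (ii) every \emph{deterministic} sample-based policy $\pi$ wins with probability at most $1/4 + \epsilon$ in expectation over $\mathcal{D}$.

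To construct $\mathcal{D}$, I would look for a family of instances whose single-sample laws are nearly indistinguishable but whose last-success positions differ. The ``success with probability one'' constraint forces at least one coordinate with $p_i = 1$ in every instance, so the natural template is to randomly place this deterministic success at a hidden position $\tau$ and surround it with carefully calibrated Bernoulli noise at the other coordinates. The parameters should be tuned so that, conditioned on any realization of the sample $X$, the posterior over the true last-success position in the real sequence $Y$ has support of size about four with roughly equal mass. A concrete candidate is to use a small $n$ (say $n = 4$ or a short-horizon instance extended by irrelevant trailing positions with $p_i = 0$) together with a carefully symmetric mixture that places the ``$p = 1$'' coordinate uniformly among a few candidate slots while making the sample marginals match.

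The analytic core is a Bayesian indistinguishability argument carried out along the trajectory of $\pi$. After $\pi$ reads $X$ and then processes the real trials $Y_1, Y_2, \ldots$ one at a time, at each point where $\pi$ could stop (a success in $Y$), the posterior over ``is this the last success?'' remains approximately $1/4$ under $\mathcal{D}$. Since $\pi$ must commit irrevocably at a success without seeing future $Y_i$, no matter what $\pi$ does its expected winning probability is bounded by this posterior, yielding the $1/4+\epsilon$ bound. The tolerance $\epsilon$ absorbs finite-size slack and can be driven to zero by enlarging the symmetric construction.

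The main obstacle is the tension between the three requirements on $\mathcal{D}$: (a) a deterministic success must live in every instance, (b) the single sample must reveal essentially nothing about the instance's identity, and (c) exactly four (not two, not more) plausible last-success positions must persist at the moment $\pi$ is forced to decide. Requirement (a) tends to leak information through the sample (since a coordinate with $p_i = 1$ always produces $X_i = 1$), so the ``$p=1$'' coordinate must be concealed inside background noise whose marginal matches the deterministic one on a single draw. Designing this concealment, together with ensuring that the symmetry between the four candidate positions is preserved throughout the reveal of $Y$, is the delicate part. Once this construction is nailed down, the remainder is a routine Bayes-rule computation.
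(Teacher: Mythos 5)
Your proposal is a strategy outline rather than a proof: the entire content of this theorem lives in the construction of the hard distribution, and that is exactly the part you leave open. You correctly identify the standard framework (Yao's principle, a prior over instances each containing a coordinate with $p_i=1$, and a posterior argument showing that at every point where the policy could stop, the conditional probability that the current success is the last one is at most $1/4+\epsilon$), and you correctly identify the central obstruction --- a coordinate with $p_i=1$ deterministically produces a success in its sample, so it cannot be concealed by naively matching single-draw marginals. But you then write that designing this concealment ``is the delicate part'' and that once it ``is nailed down, the remainder is a routine Bayes-rule computation.'' That delicate part \emph{is} the theorem. Without an explicit family of instances and a verification that no deterministic policy beats $1/4+\epsilon$ against the mixture, nothing has been established; the three desiderata you list are in genuine tension, and you give no evidence that they can be satisfied simultaneously.

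I would also flag that your guiding heuristic --- engineering a posterior supported on ``exactly four (not two, not more) plausible last-success positions with roughly equal mass'' --- is likely a misdirection. In the Nuti--Vondr\'ak line of work the constant $1/4$ arises multiplicatively as $\tfrac12\cdot\tfrac12$: per trial, the sample and the real outcome are exchangeable, so roughly with probability $1/2$ the informative draw lands on the wrong side, and independently with probability about $1/2$ the success at which you stop fails to be the last one. The hard instances are built to make both of these events essentially fair coins simultaneously, not to create a four-way symmetric posterior over positions; aiming for the latter may lead you to a construction that cannot be completed. Finally, note that the paper does not prove this statement itself --- it imports it from \cite{NV23}, where the bound is established for the adversarial-order single-sample secretary problem --- so there is no in-paper proof to match; judged on its own, your proposal has a genuine gap at its core.
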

Nuti and Vondr\'{a}k~\cite{NV23} provided this impossibility result while examining a setting named the \emph{adversarial order single sample secretary problem}, which is an optimal stopping problem with the goal of maximizing the probability of stopping on the largest value in a sequence of $n$ independent real-valued random variables. Observing a tentatively maximum value in this problem can be interpreted as observing a success in our problem. They also proved that the following simple policy achieves a winning probability of $1/4$: set the largest value from the samples as a threshold and stop at the first index whose value beats this threshold. However, this fact does not directly imply the existence of a stopping policy that attains a winning probability of $1/4$ for the last success problem. Indeed, for the last success problem, the winning probability for any algorithm is $0$ if the success probabilities are $0$. Note that, in the adversarial order single sample secretary problem, such a case does not exist because the first variable must be a tentatively maximum value.

\paragraph*{Our Contributions}
We first examine the no-sample model, where $m=0$, and show that no policy can guarantee a positive winning probability even when a success occurs with probability one (Theorem~\ref{thm:noinfo}).

For the single sample model, where $m=1$, we propose two natural deterministic policies, which we call the \emph{from the last success (FLS)} policy and the \emph{after the second last success (ASLS)} policy, and evaluate their winning probabilities. Each of the two policies first determines a threshold index from the sample sequence and then stops at the first success from the index. The FLS policy and the ASLS policy select the index of the last success and the \emph{next} index of the second-last success in the sample sequence as the threshold index, respectively. We show that the FLS policy cannot guarantee a winning probability of better than $(1-e^{-4})/4\approx 0.2454$ even in instances where a success occurs with probability one (Theorem~\ref{thm:FLS}). On the other hand, we prove that the ASLS policy guarantees a winning probability of $1/4$ for every instance where the sum of the odds is at least $(\sqrt{3}-1)/2\approx 0.3660$ (Theorem~\ref{thm:ASLS}). We also demonstrate that the ASLS policy is nearly optimal for any case with a restriction on the sum of the odds.

Moreover, we analyze a \emph{randomized} policy derived from a policy proposed by Nuti and Vondr\'{a}k~\cite{NV23} for the adversarial order single sample secretary problem. This policy randomly selects the threshold index to be either the index of the last success or the next index of the last success in the sample sequence, each with a probability of $1/2$. We conducted this analysis for comparison with our policies. Our analysis reveals that this policy guarantees a winning probability of $1/4$ if the sum of the odds is at least $1/2$ (Theorem~\ref{thm:FLSR}) for our problem setting. However, it fails to guarantee a winning probability of $1/4$ if the sum of the odds is slightly less than $1/2$. Thus, we conclude that the ASLS policy is superior in both performance and deterministic nature.

For the multiple sample model, we provide a policy that guarantees a winning probability of $1/e-\epsilon$ with a constant $m$ for any positive constant $\epsilon$ if the sum of the odds is at least $1$ (Theorem~\ref{thm:mltsample}). Notably, this result does not depend on the number of trials $n$. A natural policy would be to estimate the probability of success for each distribution and apply Bruss's policy. However, such a method may lead to errors that depend on the number of trials $n$. Instead, our policy determines a threshold index from the samples by estimating an index $i$ such that $\prod_{k=i}^n(1-p_k)\le 1/e+\delta$ and $\prod_{k=i+1}^n(1-p_k)\ge 1/e$ for a small $\delta>0$. To evaluate the winning probability of this policy, we first demonstrate that its winning probability is at least $1/e-\delta$ if it correctly identifies an index $i$ that satisfies these conditions. Subsequently, we show that an index $i$ meeting the conditions can be selected with high probability by utilizing a martingale property. Moreover, we prove that no policy can guarantee a winning probability of exactly $1/e$ if $m$ is a finite number (Proposition~\ref{prop:incomplete}).

\subsection{Related work}
One of the most fundamental problems in optimal stopping is the \emph{secretary problem} (see the survey by Ferguson~\cite{F89} for a detailed history). In the classical setting, applicants are interviewed one by one in a random order. After each interview, the interviewer must make an immediate and irrevocable decision to either hire or reject the candidate. The interviewer can only rank the candidates among those interviewed up to that point. The goal is to maximize the probability of hiring only the best candidate. The secretary problem can be viewed as a special case of the last success problem of known distributions where $p_i=1/i$ for $i=1,2,\dots,n$. This is because the occurrence of a best candidate so far in the secretary problem is analogous to the occurrence of a success in the last success problem.

Another fundamental problem is the \emph{prophet inequality problem}. In this problem, a decision-maker observes a sequence of individual real-valued random variables, one by one. For each observation, an irrevocable decision must be made to either select the current variable or wait for the next one. The objective is to select the variable with the best value. It is known that a $1/2$-competitive algorithm exists for this problem, and it is best possible~\cite{KS78}.

Models that allow limited sample access to unknown distributions have been proposed for online optimization in recent years. Azar et al.~\cite{AKW14} introduced a sample model in which inputs are drawn from unknown distributions, but the decision maker can access some samples from each distribution. They applied this model to the prophet inequalities under constraints, guaranteeing a constant competitive ratio for each setting. Rubinstein et al.~\cite{RWW20} proposed a stopping policy with a competitive ratio of $1/2$ for the single sample prophet inequality. Remarkably, this competitive ratio is optimal even when the distributions are fully known in advance~\cite{KS78}. Building on these works, online optimization problems with a limited number of samples have been studied extensively for the past decade~\cite{CDFFLPPR22,CDFS19,CCES20,CALTA20,CZ24,DRY15,DLPV23,GLT22,KNR20,NV23}.

Several studies have also been conducted on the last success problem in the unknown distribution setting. Bruss and Louchard~\cite{BL09} considered a variant of the last success problem where the distributions are unknown but identical (i.e., $p_1=\dots=p_n=p$). They investigated a stopping policy by estimating the success probability $p$ from observed random variables. The special case of our problem with $m=1$ can be reduced to the \emph{the single sample secretary problem in an adversarial order} by considering the no success scenario as a win (refer to the subsection~\ref{subsec:NV} in details). Nuti and Vondr\'{a}k~\cite{NV23} proposed a policy for the problem that ensures a winning probability of $1/4$ and showed that this is best possible by using Theorem~\ref{thm:NV}.\footnotemark{}\footnotetext{Nuti and Vondr\'{a}k~\cite{NV23} provided this lower bound even for a slightly more general problem called the \emph{adversarial order two-sided game of googol}.}

\section{Preliminaries}
\subsection{Model}
We formally define the last success problem with $m$ samples. For a positive integer $n$, we denote the set $\{1,\ldots,n\}$ by $[n]$. Suppose that there are $n$ random variables $X_1,\ldots, X_n$, following independent and non-identical Bernoulli distributions. A trial $i$ is called \emph{success} if $X_i=1$. For each trial $i\in[n]$, the \emph{success probability} $p_i=\Pr[X_i=1]$ is unknown. Instead, $m$ samples independently drawn from the $i$th Bernoulli distribution are available for each $i\in[n]$. We sequentially observe realizations of the variables $X_1,\ldots, X_n$. Upon observing a success at trial $i$, an immediate and irrevocable decision must be made to either halt the observation or continue to the subsequent trial. This decision must be based only on the observed values $X_1,\dots,X_i$ and the samples. The result is a \emph{win} if stopping the observation on the last success (i.e., $X_i=1$ and $X_{i+1}=\dots=X_n=0$). Note that, if no successes are observed by the end of the sequence, the result must be a loss. Our goal is to design a stopping policy that maximizes the probability of a win, that is, the probability of stopping at the last success.

We will evaluate policy performance with a worst-case analysis. However, for instances where successes never occur (i.e., $p_1=\dots=p_n=0$), the probability of a win is zero no matter what policies are used. To conduct a meaningful analysis, we use the \emph{sum of the odds} $R=\sum_{i=1}^n r_i$ as a parameter in our analysis, where $r_i=p_i/(1-p_i)$ is the odds of $i$th trial. This parameter $R$ plays a crucial role in the context of the last success problem~\cite{B00}. Let $\mathcal{I}_{m,R}$ be the set of instances of the last success problem with $m$ samples such that the sum of the odds is at least $R$. For a stopping policy $\mathcal{P}$ and an instance $I\in\mathcal{I}_{m,R}$, let $\mathcal{P}(I)$ be the winning probability when we apply $\mathcal{P}$ to $I$. Then, we call $\inf_{I\in\mathcal{I}_{m,R}}\mathcal{P}(I)$ the winning probability of $\mathcal{P}$ for \emph{the $(m,R)$-last success problem}. The $(m,R)$-last success problem is easier than the $(m,R')$-last success problem when $R>R'$ since $\mathcal{I}_R\subsetneq\mathcal{I}_{R'}$. Thus, the easiest case is the $(m,\infty)$-last success problem.

\subsection{Basic observation}
The sum of the odds $R$ is associated with the probability of no success as follows. 
\begin{restatable}{lemma}{nosuccess}
\label{lem:nosuccess}
For $p_1,\dots,p_n\in[0,1]$, let $R=\sum_{i=1}^n p_i/(1-p_i)$ and $Q=\prod_{i=1}^n (1-p_i)$. Then, we have $1/e^R\le Q\le 1/(1+R)$.
\end{restatable}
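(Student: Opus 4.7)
The plan is to reduce the statement to two standard elementary inequalities about the odds $r_i=p_i/(1-p_i)$. First I would rewrite the factors of $Q$ in terms of the odds: since $r_i=p_i/(1-p_i)$ gives $1-p_i=1/(1+r_i)$, we have $Q=\prod_{i=1}^n 1/(1+r_i)$, so the claim becomes the two-sided bound
\[
e^{-\sum_{i=1}^n r_i}\;\le\;\prod_{i=1}^n\frac{1}{1+r_i}\;\le\;\frac{1}{1+\sum_{i=1}^n r_i}.
\]
(The case where some $p_i=1$, i.e.\ some $r_i=\infty$, makes $Q=0$ and also makes both bounds $0$ with the natural conventions $1/\infty=0$ and $e^{-\infty}=0$, so it can be disposed of separately and we may assume $p_i<1$ for all $i$.)

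For the upper bound I would expand $\prod_{i=1}^n(1+r_i)$: since every $r_i\ge 0$, every term in the expansion is nonnegative, and retaining only the constant term and the linear terms gives $\prod_{i=1}^n(1+r_i)\ge 1+\sum_{i=1}^n r_i=1+R$. Taking reciprocals yields $Q\le 1/(1+R)$. For the lower bound I would use the scalar inequality $1+r\le e^{r}$, valid for all real $r$, applied coordinatewise: $\prod_{i=1}^n(1+r_i)\le \prod_{i=1}^n e^{r_i}=e^R$, and taking reciprocals gives $Q\ge e^{-R}$.

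There is no real obstacle here; the only care needed is handling the degenerate case $p_i=1$ and being explicit that the two inequalities combined with the reformulation $1-p_i=1/(1+r_i)$ yield the claim. An alternative, essentially equivalent, route would be to take logarithms and use $-r_i\le \log(1-p_i)=-\log(1+r_i)\le -r_i/(1+r_i)$ together with a convexity/telescoping argument, but the direct product-expansion proof above is shorter and avoids any case analysis on whether $R$ is large or small.
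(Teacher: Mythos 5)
Your proof is correct and follows essentially the same route as the paper's: both rewrite $Q$ as $1/\prod_{i=1}^n(1+r_i)$, obtain the upper bound by keeping only the constant and linear terms in the expansion of the product, and dispose of the $p_i=1$ case separately. The only difference is in the lower bound, where you apply $1+r\le e^r$ factorwise while the paper instead uses the AM--GM inequality to get $\prod_i(1+r_i)\le(1+R/n)^n\le e^R$; your step is slightly more direct but establishes the same intermediate inequality.
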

\begin{proof}
If $p_i=1$ for some $i$, we have $R=\infty$ and $Q=0$, and hence the desired inequalities hold.
Suppose that $p_i<1$ for all $i\in[n]$.
Then, we have 
\begin{align*}
Q&=\prod_{i=1}^n (1-p_i)
=\prod_{i=1}^n \frac{1}{1+\frac{p_i}{1-p_i}}
= \frac{1}{\prod_{i=1}^n\left(1+\frac{p_i}{1-p_i}\right)}.
\end{align*}
By considering binomial expansion, we get
$\prod_{i=1}^n\left(1+\frac{p_i}{1-p_i}\right)\ge 1+\sum_{i=1}^n \frac{p_i}{1-p_i} = 1+R$, and hence
\begin{align*}
Q=\frac{1}{\prod_{i=1}^n\left(1+\frac{p_i}{1-p_i}\right)}\le \frac{1}{1+R}.
\end{align*}
Moreover, we have
\begin{align*}
\prod_{i=1}^n\left(1+\frac{p_i}{1-p_i}\right)
&\le \left(\frac{1}{n}\sum_{i=1}^n\left(1+\frac{p_i}{1-p_i}\right)\right)^n
= \left(1+\frac{R}{n}\right)^n
\le e^R,
\end{align*}
where the first inequality follows from the AM-GM inequality, and the second inequality follows from the fact that $(1+1/x)^x\le e$ for any positive $x$. Thus, we obtain
\begin{align*}
Q=\frac{1}{\prod_{i=1}^n\left(1+\frac{p_i}{1-p_i}\right)}\ge \frac{1}{e^R}. 
\end{align*}
\end{proof}
We can derive an upper bound of the winning probability for the last success problem with samples by referencing the upper bound in the complete information setting.
\begin{prop}[Bruss~\cite{B00}]
\label{prop:bruss}
The winning probability of any (randomized) stopping policy for the $(m,R)$-last success problem is at most $R/e^R$ if $R\le 1$ and at most $1/e$ if $R\ge 1$.
\end{prop}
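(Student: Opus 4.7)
The plan is to reduce the $(m,R)$-last success problem to the classical complete-information last success problem and invoke the extremal bound implicit in Bruss~\cite{B00}: a sample policy is no more powerful than a complete-information policy, so an upper bound on the latter transfers to the former.

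First I would show that, for every sample policy $\mathcal{P}$ and every instance $I=(p_1,\dots,p_n)$, one has $\mathcal{P}(I)\le\mathrm{OPT}^{\mathrm{CI}}(I)$, where $\mathrm{OPT}^{\mathrm{CI}}(I)$ denotes the optimal winning probability on $I$ when the distributions are known. The argument is the standard information-monotonicity reduction: given $\mathcal{P}$, define a complete-information policy $\widetilde{\mathcal{P}}$ that, upon being handed $(p_1,\dots,p_n)$, internally draws $m$ independent Bernoulli$(p_i)$ samples for each $i\in[n]$ and then runs $\mathcal{P}$ on these self-generated samples in tandem with the real observations $X_1,\dots,X_n$. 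Because the simulated samples have exactly the same law as the true samples, the joint distribution of observations coincides, so $\widetilde{\mathcal{P}}(I)=\mathcal{P}(I)$, which gives $\mathcal{P}(I)\le\mathrm{OPT}^{\mathrm{CI}}(I)$.

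Next I would exhibit a sequence of hard instances in $\mathcal{I}_{m,R}$ on which $\mathrm{OPT}^{\mathrm{CI}}$ attains the claimed bound in the limit. Take the i.i.d.\ instance $p_1=\dots=p_n=R/(n+R)$, whose sum of odds equals $R$. By symmetry the optimal complete-information policy is of threshold type --- stop at the first success strictly after some fixed time $t$ --- with winning probability $(n-t)\,p\,(1-p)^{n-t-1}$. Parameterising by $\alpha=(n-t)/n\in(0,1]$ and sending $n\to\infty$, this converges to $g(\alpha)=\alpha R\,e^{-\alpha R}$. A one-line calculus exercise gives $\sup_{\alpha\in(0,1]}g(\alpha)=1/e$ (attained at $\alpha=1/R$) when $R\ge 1$, and $\sup_{\alpha\in(0,1]}g(\alpha)=R/e^R$ (attained at the boundary $\alpha=1$) when $R\le 1$. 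Taking the infimum of $\mathcal{P}(I)$ over $I\in\mathcal{I}_{m,R}$ and combining with the previous paragraph yields the stated bound.

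The reduction step is essentially a one-line observation and the maximisation is routine, so I do not anticipate a serious obstacle. The only thing requiring a sentence of justification is that threshold policies are optimal in the i.i.d.\ case, which is standard and already implicit in Bruss's odds algorithm applied to the symmetric instance.
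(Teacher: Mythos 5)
Your proof is correct and follows exactly the route the paper intends: the paper states this proposition without proof, citing Bruss, and justifies it only by the remark that an upper bound in the complete-information setting transfers to the sample setting via precisely the simulation argument you spell out (a sample policy can be emulated by a complete-information policy that draws the $m$ samples internally). Your rederivation of Bruss's tight bound on the i.i.d.\ instance $p_1=\dots=p_n=R/(n+R)$, with the limit $\alpha R e^{-\alpha R}$ maximised at $\alpha=\min\{1/R,1\}$, is also sound (the optimality of index-threshold rules is exactly Bruss's odds theorem), so you have supplied a valid self-contained proof of what the paper merely cites.
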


For the prophet inequality problem~\cite{KS78}, it has been demonstrated that the optimal performance of $1/2$, which can be achieved with complete information, can be attained using just a single sample~\cite{RWW20}. In contrast, for the last success problem, the optimal winning probability $1/e$, which can be achieved with complete information, cannot be attained using a finite number of samples.

\begin{restatable}{prop}{incomplete}\label{prop:incomplete}
For any (randomized) stopping policy and any positive integer $m$, the winning probability for the $(m,\infty)$-last success problem is strictly less than $1/e$.
\end{restatable}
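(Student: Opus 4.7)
The plan is proof by contradiction using a one-parameter family of iid-type instances. Fix a randomized $m$-sample policy $\mathcal{P}$ and assume, for contradiction, that $\mathcal{P}(I)\ge 1/e$ for every $I\in\mathcal{I}_{m,\infty}$. Consider the family $I_{n,c}$ of instances with $n+1$ trials indexed $0,1,\ldots,n$, where $p_0=1$ (forcing $R=\infty$, so $I_{n,c}\in\mathcal{I}_{m,\infty}$) and $p_i=c/n$ for $i\in[n]$, for a real parameter $c>1$. Since $X_0=1$ is a forced success at position $0$, the policy either stops there with sample-conditional probability $\alpha(Y)$ or proceeds with a continuation rule $S_Y$ applied on the iid $\mathrm{Bern}(c/n)$ sub-instance on positions $1,\ldots,n$, so that
\begin{align*}
\mathcal{P}(I_{n,c}) = \mathbb{E}_Y\!\left[\alpha(Y)(1-c/n)^n + (1-\alpha(Y))\,W(S_Y,c)\right]\!,
\end{align*}
where $W(S_Y,c)$ denotes the winning probability of $S_Y$ on the sub-instance.

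As $n\to\infty$, $(1-c/n)^n\to e^{-c}<1/e$ for $c>1$, and by Proposition~\ref{prop:bruss} applied to the sub-instance (whose sum of odds tends to $c\ge 1$), $W(S_Y,c)\le 1/e+o(1)$. The lower bound $\mathcal{P}(I_{n,c})\ge 1/e$ thus forces both $\mathbb{E}_Y[\alpha(Y)]\to 0$ and $W(S_Y,c)\to 1/e$ for almost every sample $Y$ as $n\to\infty$.

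The core step is to contradict the second conclusion via Poisson approximation. As $n\to\infty$, the iid $\mathrm{Bern}(c/n)$ sub-instance converges to a rate-$c$ Poisson process on $[0,1]$, and by the continuous analog of Bruss's theorem, any (possibly randomized, history-dependent) stopping rule on this Poisson process has asymptotic winning probability expressible as $\mathbb{E}[\beta c e^{-\beta c}]$, where $\beta\in[0,1]$ encodes the rule's effective stopping-fraction distribution. This quantity is at most $1/e$, with equality only when $\beta=1/c$ almost surely. Since $S_Y$ depends only on the sample vector $Y\in\{0,1\}^{mn}$ (each of which has positive probability under every $c\in(0,n)$), the effective fraction attained by $S_Y$ is a function of $Y$ alone; the equality condition $\beta=1/c$ a.s.\ (for a.e.\ $Y$) can hold for at most one value of $c$. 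Choosing any $c>1$ different from this single value makes $W(S_Y,c)$ asymptotically bounded away from $1/e$ on a positive-probability set of samples, and hence $\mathcal{P}(I_{n,c})<1/e$ for all sufficiently large $n$, a contradiction.

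The main obstacle is the Poisson-approximation step: one must rigorously justify taking the continuous limit for an arbitrary stopping rule (possibly randomized and history-dependent), characterize its asymptotic winning probability in the form $\mathbb{E}[\beta c e^{-\beta c}]$ with a well-defined effective fraction distribution, and argue the uniqueness of the value of $c$ at which the limit attains $1/e$.
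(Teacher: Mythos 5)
There is a genuine gap, and it sits exactly where you flag it: the ``core step'' is not a technicality but the entire content of the proof, and as stated it does not go through. First, the claim that \emph{every} (randomized, history-dependent) stopping rule on the limiting Poisson process has winning probability of the form $\mathbb{E}[\beta c e^{-\beta c}]$ for some ``effective stopping-fraction'' $\beta$, with equality to $1/e$ only if $\beta=1/c$ a.s., is asserted without proof; a general rule stops at a random success time $T$ and wins with conditional probability $e^{-(1-T)c}$, and reducing this to the claimed one-parameter form (and characterizing the equality case) requires a real argument. Second, the uniqueness step is logically flawed as written: you say the effective fraction ``is a function of $Y$ alone,'' but for a history-dependent rule the stopping time depends on the realization of $X$ as well, and the law of $X$ itself varies with $c$; likewise ``for a.e.\ $Y$'' is with respect to a $c$-dependent measure. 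One could try to repair this via mutual absolute continuity of the sample/process laws for different rates, but that repair is itself part of the missing work. Finally, justifying the Poisson limit uniformly over arbitrary randomized sample-dependent policies (so that ``asymptotically bounded away from $1/e$'' yields a strict bound at some finite $n$) is another unaddressed layer.

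For comparison, the paper's proof is far more elementary and avoids all of this. It pairs the degenerate instance $I_n$ with probabilities $(1,0,\dots,0)$ against $I_n'$ with $(1,2/n,\dots,2/n)$. The samples of $I_n$ are deterministic, and with probability $(1-2/n)^{(n-1)m}\to e^{-2m}>0$ the samples of $I_n'$ coincide with them, so the policy's decision at trial $1$ is the same conditional probability $\alpha_n$ in both cases. Winning $I_n$ forces $\alpha_n\ge 1/e$, but stopping at trial $1$ in $I_n'$ wins only with probability $(1-2/n)^{n-1}\to e^{-2}<1/e$, while the full-information optimum for $I_n'$ is $(1-2/n)^{(n-2)/2}\to 1/e$; combining these gives a winning probability at most $\tfrac{1}{e}-\tfrac{1}{e^{2(m+1)}}\bigl(1-\tfrac{1}{e}\bigr)$ in the limit. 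Your instinct---that finitely many samples cannot calibrate the threshold for all rates simultaneously---is the right one, but you should exploit it with a two-instance indistinguishability argument rather than a limit theory for arbitrary stopping rules on Poisson processes.
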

\begin{proof}
Fix a policy $\mathcal{P}$ and a positive integer $m$. For a positive even number $n$, let $I_n$ and $I'_n$ be the instances of the $(m,\infty)$-last success problem such that the success probabilities $(p_1,p_2,p_3,\dots,p_n)$ are $(1,0,0,\dots,0)$ and $(1,2/n,2/n,\dots,2/n)$, respectively. Then, the probability is $1$ that the samples for $I_n$ are all successes for the first trial and all failures for the other trials. The samples for $I_n'$ are exactly the same with probability $(1-2/n)^{(n-1)m}$ because all $m(n-1)$ samples coming from all trials except the first must be $0$. Let $\alpha_n$ be the probability that the policy stops at the first trial for this outcome of the samples. Without loss of generality, we may assume that $\alpha_n\ge 1/e$ for any $n$ since otherwise the winning probability for $I_n$ is strictly smaller than $1/e$. If the policy stops at the first trial in the instance $I_n'$, the winning probability is $(1-2/n)^{n-1}$. Moreover, for $I_n'$, the winning probability of the policy is at most $(1-2/n)^{(n-2)/2}$ regardless of whether the policy stops at the first trial. This upper bound can be shown by the fact that Bruss's policy~\cite{B00} (i.e., stopping at the first observed success from the index $\tau=\min\{t\in[n]\mid\sum_{i=t+1}^n r_i<1\}$, where $r_i$ is the odds of $i$th trial) is optimal when the success probabilities are known in advance. As the odds $r_i$ of each trial $i\in[n]$ is $\infty$ if $i=1$ and $2/(n-2)$ if $i\ge 2$, Bruss's policy sets $\tau=n/2+2$ as the threshold by $\sum_{j=\tau}^n r_j=(n-\tau+1)\cdot\frac{2}{n-2}=\frac{n-2}{2}\cdot\frac{2}{n-2}=1$ and $\sum_{j=\tau+1}^n r_j(n-\tau)\cdot\frac{2}{n-2}=\frac{n-4}{2}\cdot\frac{2}{n-2}<1$. Therefore, the optimal winning probability for the instance $I_n'$ is 
\begin{align*}
    \sum_{i=\tau}^n \frac{p_i}{1-p_i}\prod_{j=\tau}^n(1-p_j)
    &=\left(\prod_{j=\tau}^n(1-p_j)\right)\cdot \sum_{i=\tau}^n \frac{p_i}{1-p_i}
    =\left(1-\frac{2}{n}\right)^{n-\tau+1}\cdot\sum_{i=\tau}^n r_i\\
    &=\left(1-\frac{2}{n}\right)^{\frac{n-2}{2}}\cdot 1
    =\left(1-\frac{2}{n}\right)^{\frac{n-2}{2}},
\end{align*}
and this value is an upper bound of the winning probability of $\mathcal{P}$ for $I_n'$.

We demonstrate that the winning probability of the policy $\mathcal{P}$ for $I_n'$ is strictly less than $1/e$ when $n$ is a sufficiently large number. If the samples are all successes for the first trial and all failures for the other trials (this happens with probability $(1-2/n)^{(n-1)m}$), the policy $\mathcal{P}$ wins with probability at most $\alpha_n\cdot\left(1-2/n\right)^{n-1}+(1-\alpha_n)\cdot(1-2/n)^{(n-2)/2}$ because it stops at the first trial with probability $\alpha_n$. If there is at least one success in a sample from a trial other than $1$ (this happens with probability $1-\left(1-2/n\right)^{(n-1)m}$), the winning probability of $\mathcal{P}$ for $I_n'$ is at most $(1-2/n)^{(n-2)/2}$. Therefore, the winning probability of $\mathcal{P}$ for $I'_n$ is at most 
\begin{align*}
\MoveEqLeft
\textstyle\!\left(1-\tfrac{2}{n}\right)^{(n-1)m}\!\left(\alpha_n\left(1-\tfrac{2}{n}\right)^{n-1}\!\!+\!(1-\alpha_n)\left(1-\tfrac{2}{n}\right)^{\frac{n-2}{2}}\right)+\left(1-\left(1-\tfrac{2}{n}\right)^{(n-1)m}\right)\left(1-\tfrac{2}{n}\right)^{\frac{n-2}{2}}\\
\textstyle&=\left(1-\tfrac{2}{n}\right)^{\frac{n-2}{2}}-\left(1-\tfrac{2}{n}\right)^{(n-1)m}\alpha_n\!\cdot\!\left(\left(1-\tfrac{2}{n}\right)^{\frac{n-2}{2}}-\left(1-\tfrac{2}{n}\right)^{n-1}\right)\\
\textstyle&\le \left(1-\tfrac{2}{n}\right)^{\frac{n-2}{2}}-\left(1-\tfrac{2}{n}\right)^{(n-1)m}\frac{1}{e}\cdot\left(\left(1-\tfrac{2}{n}\right)^{\frac{n-2}{2}}-\left(1-\tfrac{2}{n}\right)^{n-1}\right)\\
\textstyle&\to \frac{1}{e}-\frac{1}{e^{2(m+1)}}\left(1-\frac{1}{e}\right)
\end{align*}
as $n$ goes to infinity. Hence, the winning probability of any policy $\mathcal{P}$ is strictly less than $1/e$ for the instance of $I_n$ or $I_n'$ with a sufficiently large $n$.
\end{proof}

Of course, this proposition also holds for the $(m,R)$-last success problem with any positive real $R$.

For the no-sample setting where $m=0$, we can demonstrate that no (randomized) stopping policy can guarantee a positive constant probability of winning. Indeed, consider a scenario where the adversary selects an index $j$ uniformly at random from $[n]$ and sets the success probabilities as $p_1=\dots=p_j=1$ and $p_{j+1}=\dots=p_n=0$. In this scenario, we need to correctly guess the index $j$ to win, which can only be done with a probability of $1/n$. Hence, by Yao's principle~\cite{Y77}, any stopping policy can win with probability at most $1/n$ for an instance with $n$ trials. Note that the sum of the odds of the instance is $\infty$ by $p_1=1$. By considering the limit as $n$ approaches infinity, we obtain the following theorem.
\begin{thm}\label{thm:noinfo}
The winning probability of any (randomized) stopping policy is zero for the $(0,\infty)$-last success problem.
\end{thm}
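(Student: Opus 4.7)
The plan is to apply Yao's minimax principle to a carefully chosen family of hard instances, formalizing the intuition stated just before the theorem. First, for any positive integer $n$, I would construct $n$ instances $I_1, \ldots, I_n$, where $I_j$ has success probabilities $p_1 = \cdots = p_j = 1$ and $p_{j+1} = \cdots = p_n = 0$. Each $I_j$ lies in $\mathcal{I}_{0,\infty}$ because $r_1 = p_1/(1-p_1) = \infty$ forces $R = \infty$.

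Next, I would equip $\{I_1, \ldots, I_n\}$ with the uniform distribution and bound the winning probability of an arbitrary deterministic policy against it. In instance $I_j$ the realization of the trials is deterministic, namely $X_1 = \cdots = X_j = 1$ and $X_{j+1} = \cdots = X_n = 0$, so the unique last success sits at trial $j$ and winning requires stopping exactly at trial $j$. Since $m = 0$, the policy has no samples, and its decision at trial $i$ depends only on $(X_1, \ldots, X_i)$. For every $j \ge i$, this observed prefix is the all-ones string of length $i$, so the policy's action at step $i$ coincides across all such $I_j$. Consequently, any deterministic policy stops at the same trial $i^{*}$ on every instance $I_j$ with $j \ge i^{*}$ and therefore wins on at most one instance in the family, yielding an average winning probability of at most $1/n$ under the uniform distribution.

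Finally, Yao's principle transfers the bound to randomized policies: the worst-case winning probability over $\mathcal{I}_{0,\infty}$ of any randomized stopping policy is at most the expected winning probability under the uniform instance distribution, hence at most $1/n$. Since $n$ was arbitrary, letting $n \to \infty$ gives the claimed value $0$. I do not foresee a serious technical obstacle; the only delicate point is the combinatorial observation that a deterministic no-sample policy wins on at most one instance of the family, which follows immediately from the indistinguishability of the observed histories up to the critical trial.
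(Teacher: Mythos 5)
Your proof is correct and follows essentially the same route as the paper: the same family of instances $p_1=\dots=p_j=1$, $p_{j+1}=\dots=p_n=0$ with $j$ uniform, the observation that a deterministic no-sample policy wins on at most one instance because the observed all-ones prefixes are indistinguishable, Yao's principle, and the limit $n\to\infty$. You merely spell out the indistinguishability argument in more detail than the paper does.
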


\section{Single sample model}\label{sec:sumtheodds}
In this section, we concentrate on the special case where $m=1$ for the last success problem with $m$ samples, which we refer to as the \emph{single sample last success problem}. Throughout this section, we write $Y_i\in\{0,1\}$ to denote the sample of $i$th trial for each $i\in[n]$.

\subsection{Estimating the sum of the odds}\label{subsec:estimatingOdds}
When all success probabilities are known in advance, the optimal stopping policy for the last success problem is Bruss's policy~\cite{B00}, wherein one stops on the first success for which the sum of the odds for the future trials is at most $1$. Given this, a natural approach to our single sample setting is to compute an estimated success probability $\hat{p}_i$ for each trial $i\in[n]$ from the sample $Y_i$ and then apply Bruss's policy based on these estimated success probabilities. Let $\alpha_0$ and $\alpha_1$ be real numbers in the range $[0,1]$, and consider estimating $\hat{p}_i$ as $\alpha_0$ if $Y_i=0$ and $\hat{p}_i$ as $\alpha_1$ if $Y_i=1$. We will then show that regardless of the values of $\alpha_0$ and $\alpha_1$ used for estimation, we cannot achieve a winning probability of $1/4$ even when a success occurs with probability $1$.

We examine three cases: (i) $\alpha_0>0$, (ii) $\alpha_1<1/2$, and (iii) $\alpha_0=0$ and $\alpha_1\ge 1/2$.

For the first case, where $\alpha_0>0$, we consider an instance with $n=\lceil 1/a_1\rceil+1$ trials such that the success probabilities are $p_1=1$ and $p_2=\cdots=p_n=0$. Note that the last success is the first trial, which means that we must stop at the first trial to win. Here, the estimated success probabilities must be $\hat{p}_1=\alpha_1$ and $\hat{p}_2=\cdots=\hat{p}_n=\alpha_0$. As $\sum_{i=2}^n \hat{p}_i/(1-\hat{p}_i)=\lceil 1/\alpha_0\rceil\cdot \alpha_0/(1-\alpha_0)>1$, the threshold index must be at least $2$. Therefore, the policy wins with a probability of $0$.

For the second case, where $\alpha_1<1/2$, we examine an instance with $p_1=p_2=1$ and $n=2$. The estimated success probabilities must be $\hat{p}_1=\hat{p}_2=\alpha_1$. Further, the threshold index cannot be $2$ because $\hat{p}_2/(1-\hat{p}_2)=\alpha_1/(1-\alpha_1)<1$. Thus, the policy always stops at the first trial, resulting in a winning probability of $0$.

Now, it remains to examine the last case where $\alpha_0=0$ and $\alpha_1\ge 1/2$. In this case, the policy sets the index of the last success in the samples as a threshold because $\alpha_1/(1-\alpha_1)\ge 1$ and $\alpha_0/(1-\alpha_0)=0$. We refer to this policy as the \emph{From the Last Success (FLS)} policy, and we demonstrate that this policy cannot guarantee a winning probability exceeding $(1-e^{-4})/4\approx 0.2454<1/4$ even when at least one success is guaranteed to occur, i.e., the sum of the odds $R$ is positive infinity.
\begin{thm}\label{thm:FLS}
    The winning probability of the FLS policy for the $(1,\infty)$-last success problem is at most $(1-e^{-4})/4$.
\end{thm}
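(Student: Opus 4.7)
The plan is to exhibit an explicit family of instances whose FLS winning probability converges to $(1-e^{-4})/4$, thereby bounding the infimum. For each integer $n\ge 2$, consider $I_n\in\mathcal{I}_{1,\infty}$ with $p_1=1$ and $p_2=\dots=p_n=2/n$; the sum of the odds is $\infty$ since $r_1=\infty$. I will compute $\mathcal{P}_{\mathrm{FLS}}(I_n)$ in closed form and then let $n\to\infty$.

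To evaluate $\mathcal{P}_{\mathrm{FLS}}(I_n)$, I condition on the threshold index $J$ chosen by FLS, which equals the largest $j\in[n]$ with $Y_j=1$. Because $Y_1=1$ always, one has $\Pr[J=1]=(1-2/n)^{n-1}$ and $\Pr[J=j]=(2/n)(1-2/n)^{n-j}$ for $j\ge 2$. When $J=1$ the policy stops at trial $1$ (which is always a success) and wins iff $X_2=\dots=X_n=0$, contributing $(1-2/n)^{2(n-1)}$ to the win probability by independence of samples and trials. When $J=j\ge 2$ the policy stops at the first success in $[j,n]$, and the key observation is that because $p_i=2/n$ is constant on $[2,n]$, for every candidate last-success position $K\in[j,n]$ the event $\{X_j=\dots=X_{K-1}=0,\,X_K=1,\,X_{K+1}=\dots=X_n=0\}$ has the same probability $(2/n)(1-2/n)^{n-j}$, independent of $K$. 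Summing over $K$ gives conditional win probability $(n-j+1)(2/n)(1-2/n)^{n-j}$, so multiplying by $\Pr[J=j]$ and summing over $j$ yields
\begin{equation*}
\mathcal{P}_{\mathrm{FLS}}(I_n)=\Bigl(1-\tfrac{2}{n}\Bigr)^{2(n-1)}+\Bigl(\tfrac{2}{n}\Bigr)^{2}\sum_{\ell=0}^{n-2}(\ell+1)\Bigl(1-\tfrac{2}{n}\Bigr)^{2\ell},
\end{equation*}
where $\ell=n-j$.

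Finally, sending $n\to\infty$, the first term tends to $e^{-4}$, and a standard Riemann-sum argument (or dominated convergence applied to the geometric-series derivative) yields $(2/n)^{2}\sum_{\ell=0}^{n-2}(\ell+1)(1-2/n)^{2\ell}\to 4\int_0^1 xe^{-4x}\,dx=(1-5e^{-4})/4$, so $\mathcal{P}_{\mathrm{FLS}}(I_n)\to e^{-4}+(1-5e^{-4})/4=(1-e^{-4})/4$. The main obstacle is guessing the right family: the choice $p=2/n$ is not ad hoc but arises by parametrizing $p_2=\dots=p_n=c/n$, which gives the limiting winning probability $1/4+e^{-2c}(3-2c)/4$, whose unique critical point in $c>0$ is $c=2$ (the derivative is $e^{-2c}(c-2)$). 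Once this parameter is pinned down, the rest is a case split on $J$ and elementary asymptotic evaluation, and taking the infimum over $n$ gives the claimed upper bound.
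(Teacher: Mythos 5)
Your proof is correct and follows essentially the same route as the paper: the identical hard instance $p_1=1$, $p_2=\dots=p_n=2/n$, the same conditioning on the FLS threshold index, and the same Riemann-sum limit $e^{-4}+\int_0^1 4xe^{-4x}\,dx=(1-e^{-4})/4$. The only addition is your motivation for the choice $c=2$ via optimizing over $p_i=c/n$, which is a nice explanatory touch but does not change the argument.
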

\begin{proof}
    Consider the instance where the success probabilities are $p_1=1$ and $p_2=\dots=p_n=2/n$. Note that at least one success appears in the trials since $p_1=1$. Recall that the FLS policy sets the threshold to the index of the last observed success in the sampling sequence. We calculate the probability for the instance that each index becomes the threshold, as well as the winning probability for each threshold.

    In the FLS policy, the threshold is set to $\tau=1$ when no success is observed in the sample sequence except for the first trial, i.e., $Y_2=Y_3=\dots=Y_n=0$. This happens with probability $(1-2/n)^{n-1}$. When $\tau=1$, the FLS policy wins if and only if there is no success in the actual sequence except for the first trial, i.e., $X_2=X_3=\dots=X_n=0$. This event happens with probability $(1-2/n)^{n-1}$. Therefore, the probability that the policy wins with setting $\tau=1$ is $(1-2/n)^{2(n-1)}$.

    Let $k$ be an integer such that $2\le k\le n$. The threshold is set to $\tau=k$ when the index of the last success in the sample sequence is $k$, i.e., $Y_k=1$ and $Y_{k+1}=Y_{k+2}=\dots=Y_n=0$. This happens with probability $(2/n)\cdot(1-2/n)^{n-k}$. When $\tau=k$, the policy wins if and only if exactly one success is observed in $X_k,X_{k+1},\dots,X_n$. For each $\ell=k,k+1,\dots,n$, the probability that $X_{\ell}=1$ but $X_i=0$ for all $i\in\{k,k+1,\dots,n\}\setminus\{\ell\}$ is $(2/n)\cdot(1-2/n)^{n-k}$. Hence, the probability that the policy wins with setting $\tau=k$ is 
    \begin{align*}
    \frac{2}{n}\left(1-\frac{2}{n}\right)^{n-k}\cdot\sum_{\ell=k}^n \frac{2}{n}\left(1-\frac{2}{n}\right)^{n-k-1}
    =\frac{4(n-k+1)}{n^2}\left(1-\frac{2}{n}\right)^{2n-2k}.
    \end{align*}

    By summing up the probabilities of winning with each threshold index, we obtain the winning probability of the FLS policy. This probability is given by
    \begin{align*}
    \MoveEqLeft[2]
    \left(1-\frac{2}{n}\right)^{2n-2}+\sum\limits_{k=2}^n(n-k+1)\cdot\frac{2}{n}\cdot\frac{2}{n}\cdot\left(1-\frac{2}{n}\right)^{2n-2k}\\
    = {}& \left(1-\frac{2}{n}\right)^{\frac{n}{2}\cdot \frac{4(n-1)}{n}}+\frac{4}{n}\sum\limits_{k=2}^n \left(1-\frac{k-1}{n}\right)\left(1-\frac{2}{n}\right)^{\frac{n}{2}\cdot 4(1-\frac{k}{n})}\\
    \to{} & \frac{1}{e^4}+\int_0^1 4(1-x)\cdot\left(\frac{1}{e}\right)^{4(1-x)} \dd{x}=\frac{1}{e^4}+\int_0^1\frac{4x}{e^{4x}} \dd{x}\\
    = {}& \frac{1}{e^4}+\left[\frac{-(4x+1)}{4e^{4x}}\right]_0^1
    = \frac{1}{e^4}-\frac{5}{4e^4}+\frac{1}{4}=\frac{1-e^{-4}}{4},
    \end{align*}
    as $n$ goes to infinity, where the limit is evaluated by $\lim_{x\to\infty}(1-1/x)^x=1/e$ and interpreting the sum as a Riemann sum.
\end{proof}

\subsection{Direct utilization of success positions}\label{subsec:direct}
As demonstrated in the previous subsection, Bruss's policy with estimated success probabilities fails to achieve the winning probability of $1/4$. Nevertheless, by appropriately setting the threshold, we propose a \emph{deterministic} policy that guarantees a winning probability of $1/4$ as long as the sum of the odds $R$ is at least $(\sqrt{3}-1)/2$. Note that achieving a winning probability of $1/4$ is best possible according to Theorem~\ref{thm:NV}.

We consider stopping policies that determine the threshold index based only on the relative position from the success indices in the sample sequence. Then, the threshold needs to be set between the indices of the last success and the second-to-last success of the samples. Indeed, if the threshold is set after the last success index in the sample sequence, then the winning probability becomes zero for the instance with $n=2$ and $(p_1,p_2)=(1,0)$. Similarly, setting the threshold before or at the second last success index in the samples results in a winning probability of zero for the instance with $n=2$ and $(p_1,p_2)=(1,1)$. Setting the threshold at the last success (i.e., the FLS policy) is also not effective, as we showed in Theorem~\ref{thm:FLS}. Therefore, the threshold should be set between the indices of the last success and the second last success in the sample sequence.

We consider the policy of setting the threshold at the \emph{next} index of the second last success in the sample sequence. We refer to this policy as the \emph{After the Second Last Success (ASLS)} policy. If there are fewer than two successes in the samples, the threshold is assumed to be index $1$. We demonstrate that the ASLS policy wins with a probability of at least $1/4$ if the sum of the odds $R$ is at least $(\sqrt{3}-1)/2~(\approx0.3660)$.
\begin{thm}\label{thm:ASLS}
    The winning probability of the ASLS policy for the $(1,R)$-last success problem is at least $1/4$ if $R\ge (\sqrt{3}-1)/2$ and at least $\frac{R(4+3R)}{4(1+R)^2}$ if $0\le R\le (\sqrt{3}-1)/2$.
\end{thm}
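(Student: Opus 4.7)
The plan is to express the winning probability in closed form, then reduce the worst case to a Poisson limit via a splitting (smoothing) argument, and finally compare the resulting explicit formula with the stated bounds by elementary calculus.

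Since the threshold $\tau$ of the ASLS policy is determined entirely by the samples $Y_1,\ldots,Y_n$ and these are independent of the trials $X_1,\ldots,X_n$, conditioning on $\tau$ gives $P_{\mathrm{win}}=\sum_k\Pr[\tau=k]\,Q_k R_k$, where $Q_k=\prod_{i=k}^n(1-p_i)$, $R_k=\sum_{i=k}^n r_i$, and the factor $Q_k R_k$ is the probability that exactly one of $X_k,\ldots,X_n$ equals $1$. Computing the distribution of $\tau$ directly from the ASLS definition gives $\Pr[\tau=1]=\Pr[|S|\le 1]=Q_1(1+R)$ and, for $k\ge 2$, $\Pr[\tau=k]=p_{k-1}Q_k R_k$ (the event that the second-to-last sample success is at index $k-1$). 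Substituting yields
\[
P_{\mathrm{win}} \;=\; Q_1^2\,R(1+R) \;+\; \sum_{k=2}^n p_{k-1}\,(Q_kR_k)^2.
\]

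I would then argue that the infimum of this quantity over instances with sum of odds equal to $R$ is attained in the limit $n\to\infty$ with $p_i\equiv R/(n+R)$, via a splitting lemma: refining a single trial of probability $p_i$ into two consecutive trials whose odds sum to $r_i$ only decreases $P_{\mathrm{win}}$. Iterating drives every $p_i\to 0$ while preserving the total odds, so the sample and trial sequences converge to independent rate-$R$ Poisson processes on $[0,1]$. A standard order-statistic computation---the second-largest point of such a process has density $R^2(1-v)e^{-R(1-v)}$ on $[0,1]$---then evaluates the limit as
\[
P^\infty_{\mathrm{win}}(R) \;=\; \frac{1+(2R^2+2R-1)e^{-2R}}{4}.
\]
Since $2R^2+2R-1\ge 0$ precisely when $R\ge(\sqrt{3}-1)/2$, this immediately yields $P_{\mathrm{win}}\ge 1/4$ in that regime. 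For $R\in[0,(\sqrt{3}-1)/2]$, I verify $P^\infty_{\mathrm{win}}(R)\ge R(4+3R)/[4(1+R)^2]$ by elementary calculus: both sides vanish at $R=0$ and equal $1/4$ at the endpoint, the Taylor expansion at $R=0$ shows the difference begins at order $R^2/4>0$, and a derivative comparison rules out crossings on the interior.

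The main obstacle is the splitting lemma. While its statement is natural, each refinement propagates changes through the factors $Q_k$ and $R_k$ at many indices below the split, so the required monotonicity demands careful algebra and attention to edge cases (splitting at the last index, trivial trials, etc.). Should a clean direct proof prove elusive, a fallback is to bound the sum in the closed-form expression via Cauchy--Schwarz, using the telescoping identity $\sum_{k\ge 2}p_{k-1}Q_kR_k=1-Q_1(1+R)$ together with $\sum_{k\ge 2}p_{k-1}\le R$ and the range $Q_1\in[e^{-R},1/(1+R)]$ from Lemma~\ref{lem:nosuccess}, and then optimize over $Q_1$ to recover a (slightly weaker) version of the bound.
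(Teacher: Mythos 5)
Your closed-form expression for the winning probability is correct, and your Poisson-limit value $\frac{1+(2R^2+2R-1)e^{-2R}}{4}$ is exactly what the paper's final bound $\frac{1}{4}+\frac{w(2R^2+2R-1)}{4}$ evaluates to at $w=e^{-2R}$, so the target of your computation is the right one. But the argument has a genuine gap at precisely the point you flag yourself: the splitting lemma is never proved, and everything downstream (the reduction of the infimum over $\mathcal{I}_{1,R}$ to the i.i.d.\ Poisson limit) depends on it. As written, you have computed the winning probability of one family of instances, not a lower bound over all instances. The monotonicity under refinement is plausible but is the entire content of the theorem in your framing, and it is not a routine verification: a split at index $i$ changes $Q_k$ and $R_k$ for every $k\le i$ and changes the weights $p_{k-1}$ in the sum simultaneously, and one must additionally argue that iterated splitting of an arbitrary instance converges to the \emph{homogeneous} Poisson process (this is fixable since only the relative order of points matters, but it too needs to be said).

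The fallback does not rescue the proof. Cauchy--Schwarz gives $\sum_{k\ge2}p_{k-1}(Q_kR_k)^2\ge\frac{(1-Q_1(1+R))^2}{\sum_{k\ge 2}p_{k-1}}\ge\frac{(1-Q_1(1+R))^2}{R}$, and minimizing $\frac{x^2R}{1+R}+\frac{(1-x)^2}{R}$ over $x=Q_1(1+R)$ yields only $\frac{R}{R^2+R+1}$; this falls below $1/4$ once $R>(3+\sqrt{5})/2\approx 2.62$ and tends to $0$ as $R\to\infty$, so it cannot establish the $1/4$ guarantee. The paper avoids the extremal-instance question altogether: it couples the sample and actual sequences, classifies realizations by the relative order of the last two successes in each sequence (with two virtual successes prepended), and uses a swapping bijection between cases to derive linear relations among the case probabilities; summing the winning cases then gives $\frac{1}{4}+\frac{w(2R^2+2R-1)}{4}$ with $w=\prod_{i}(1-p_i)^2$ directly for \emph{every} instance, with no limiting or monotonicity argument. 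To salvage your route you must either prove the splitting lemma or find an instance-independent lower bound on $\sum_{k\ge2}p_{k-1}(Q_kR_k)^2$ that does not collapse for large $R$.
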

\begin{proof}
We give the proof only for the cases where $0\le p_i<1$ for all $i\in[n]$. The other cases where $p_i=1$ for some $i\in[n]$ can be proved by considering the continuity of the winning probability with respect to $p_1,\dots,p_n$.

To simplify the analysis, we assume virtual trials where successes always occur at the $0$th and $-1$st positions, i.e., $p_0=p_{-1}=1$ and $X_0=X_{-1}=Y_0=Y_{-1}=1$.
With these virtual trials, both sample and actual sequences must contain at least two successes. Let $i_1$ and $i_2$ be the indices of the last and the second last success observed in the sample sequence $(Y_i)_{i=-1}^n$, respectively. Similarly, let $j_1$ and $j_2$ be the indices of the last and the second last success observed in $(X_i)_{i=-1}^n$, respectively. We then analyze the winning probability of the ASLS policy based on the relative positions of $i_1$, $i_2$, $j_1$, $j_2$.

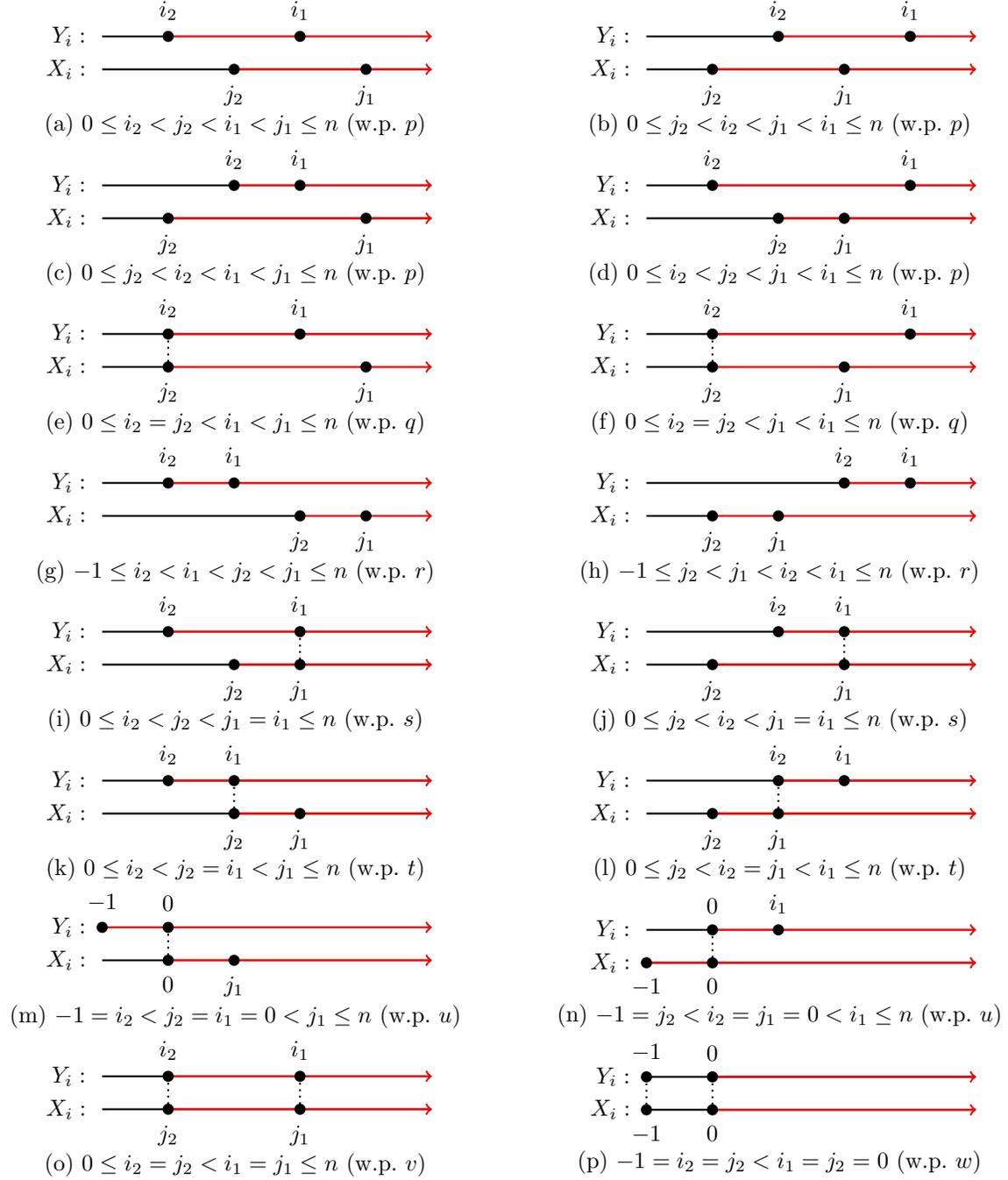
\begin{figure*}[!ht]
    \centering
    \begin{subfigure}[c]{0.5\textwidth}
    \centering
    \begin{tikzpicture}[classification]
        \draw[->](0,0.5) node [left=3pt]{$Y_i:$} -- (5,0.5);
        \draw[->](0,0) node [left=3pt]{$X_i:$} -- (5,0);
        \draw[->,red] (1,0.5) -- (5,0.5);
        \draw[->,red] (2,0) -- (5,0);
        \filldraw (1,0.5) circle (2pt) node [above=3pt]{$i_2$};
        \filldraw (3,0.5) circle (2pt) node [above=3pt]{$i_1$};
        \filldraw (2,0) circle (2pt) node [below=3pt]{$j_2$};
        \filldraw (4,0) circle (2pt) node [below=3pt]{$j_1$};
    \end{tikzpicture}
    \caption{$0\le i_2<j_2<i_1<j_1\le n$ (w.p.\ $p$)}
    \label{fig:i1<j1<i2<j2}
    \end{subfigure}%
    \begin{subfigure}[c]{0.5\textwidth}
    \centering
    \begin{tikzpicture}[classification]
        \draw[->](0,0.5) node [left=3pt]{$Y_i:$} -- (5,0.5);
        \draw[->](0,0) node [left=3pt]{$X_i:$} -- (5,0);
        \draw[->,red] (2,0.5) -- (5,0.5);
        \draw[->,red] (1,0) -- (5,0);
        \filldraw (2,0.5) circle (2pt) node [above=3pt]{$i_2$};
        \filldraw (4,0.5) circle (2pt) node [above=3pt]{$i_1$};
        \filldraw (1,0) circle (2pt) node [below=3pt]{$j_2$};
        \filldraw (3,0) circle (2pt) node [below=3pt]{$j_1$};
    \end{tikzpicture}
    \caption{$0\le j_2<i_2<j_1<i_1\le n$ (w.p.\ $p$)}
    \label{fig:j1<i1<j2<i2}
    \end{subfigure}%
    \par
    \begin{subfigure}[c]{0.5\textwidth}
    \centering
    \begin{tikzpicture}[classification]
        \draw[->](0,0.5) node [left=3pt]{$Y_i:$} -- (5,0.5);
        \draw[->](0,0) node [left=3pt]{$X_i:$} -- (5,0);
        \draw[->,red] (2,0.5) -- (5,0.5);
        \draw[->,red] (1,0) -- (5,0);
        \filldraw (2,0.5) circle (2pt) node [above=3pt]{$i_2$};
        \filldraw (3,0.5) circle (2pt) node [above=3pt]{$i_1$};
        \filldraw (1,0) circle (2pt) node [below=3pt]{$j_2$};
        \filldraw (4,0) circle (2pt) node [below=3pt]{$j_1$};
    \end{tikzpicture}
    \caption{$0\le j_2<i_2<i_1<j_1\le n$ (w.p.\ $p$)}
    \label{fig:j2<i2<i1<j1}
    \end{subfigure}%
    \begin{subfigure}[c]{0.5\textwidth}
    \centering
    \begin{tikzpicture}[classification]
        \draw[->](0,0.5) node [left=3pt]{$Y_i:$} -- (5,0.5);
        \draw[->](0,0) node [left=3pt]{$X_i:$} -- (5,0);
        \draw[->,red] (1,0.5) -- (5,0.5);
        \draw[->,red] (2,0) -- (5,0);
        \filldraw (1,0.5) circle (2pt) node [above=3pt]{$i_2$};
        \filldraw (4,0.5) circle (2pt) node [above=3pt]{$i_1$};
        \filldraw (2,0) circle (2pt) node [below=3pt]{$j_2$};
        \filldraw (3,0) circle (2pt) node [below=3pt]{$j_1$};
    \end{tikzpicture}
    \caption{$0\le i_2<j_2<j_1<i_1\le n$ (w.p.\ $p$)}
    \label{fig:i1<j1<j2<i2}
    \end{subfigure}
    \par
    \begin{subfigure}[c]{0.5\textwidth}
    \centering
    \begin{tikzpicture}[classification]
        \draw[->](0,0.5) node [left=3pt]{$Y_i:$} -- (5,0.5);
        \draw[->](0,0) node [left=3pt]{$X_i:$} -- (5,0);
        \draw[->,red] (1,0.5) -- (5,0.5);
        \draw[->,red] (1,0) -- (5,0);
        \draw[dotted] (1,0.5) -- (1,0);
        \filldraw (1,0.5) circle (2pt) node [above=3pt]{$i_2$};
        \filldraw (3,0.5) circle (2pt) node [above=3pt]{$i_1$};
        \filldraw (1,0) circle (2pt) node [below=3pt,fill=white]{$j_2$};
        \filldraw (4,0) circle (2pt) node [below=3pt]{$j_1$};
    \end{tikzpicture}
    \caption{$0\le i_2=j_2<i_1<j_1\le n$ (w.p.\ $q$)}
    \label{fig:i_2=j_2<i_1<j_1}
    \end{subfigure}%
    \begin{subfigure}[c]{0.5\textwidth}
    \centering
    \begin{tikzpicture}[classification]
        \draw[->](0,0.5) node [left=3pt]{$Y_i:$} -- (5,0.5);
        \draw[->](0,0) node [left=3pt]{$X_i:$} -- (5,0);
        \draw[->,red] (1,0.5) -- (5,0.5);
        \draw[->,red] (1,0) -- (5,0);
        \draw[dotted] (1,0.5) -- (1,0);
        \filldraw (1,0.5) circle (2pt) node [above=3pt]{$i_2$};
        \filldraw (4,0.5) circle (2pt) node [above=3pt]{$i_1$};
        \filldraw (1,0) circle (2pt) node [below=3pt,fill=white]{$j_2$};
        \filldraw (3,0) circle (2pt) node [below=3pt]{$j_1$};
    \end{tikzpicture}
    \caption{$0\le i_2=j_2<j_1<i_1\le n$ (w.p.\ $q$)}
    \label{fig:i_2=j_2<j_1<i_1}
    \end{subfigure}
    \par
    \begin{subfigure}[c]{0.5\textwidth}
    \centering
    \begin{tikzpicture}[classification]
        \draw[->](0,0.5) node [left=3pt]{$Y_i:$} -- (5,0.5);
        \draw[->](0,0) node [left=3pt]{$X_i:$} -- (5,0);
        \draw[->,red] (1,0.5) -- (5,0.5);
        \draw[->,red] (3,0) -- (5,0);
       \filldraw (1,0.5) circle (2pt) node [above=3pt]{$i_2$};
        \filldraw (2,0.5) circle (2pt) node [above=3pt]{$i_1$};
        \filldraw (3,0) circle (2pt) node [below=3pt]{$j_2$};
        \filldraw (4,0) circle (2pt) node [below=3pt]{$j_1$};
    \end{tikzpicture}
    \caption{$-1\le i_2<i_1<j_2<j_1\le n$ (w.p.\ $r$)}
    \label{fig:i_1<i_2<j_1<j_2}
    \end{subfigure}%
    \begin{subfigure}[c]{0.5\textwidth}
    \centering
    \begin{tikzpicture}[classification]
        \draw[->](0,0.5) node [left=3pt]{$Y_i:$} -- (5,0.5);
        \draw[->](0,0) node [left=3pt]{$X_i:$} -- (5,0);
        \draw[->,red] (3,0.5) -- (5,0.5);
        \draw[->,red] (1,0) -- (5,0);
        \filldraw (3,0.5) circle (2pt) node [above=3pt]{$i_2$};
        \filldraw (4,0.5) circle (2pt) node [above=3pt]{$i_1$};
        \filldraw (1,0) circle (2pt) node [below=3pt]{$j_2$};
        \filldraw (2,0) circle (2pt) node [below=3pt]{$j_1$};
    \end{tikzpicture}
    \caption{$-1\le j_2<j_1<i_2<i_1\le n$ (w.p.\ $r$)}
    \label{fig:j_1<j_2<i_1<i_2}
    \end{subfigure}
    \par
    \begin{subfigure}[c]{0.5\textwidth}
    \centering
    \begin{tikzpicture}[classification]
        \centering
        \draw[->](0,0.5) node [left=3pt]{$Y_i:$} -- (5,0.5);
        \draw[->](0,0) node [left=3pt]{$X_i:$} -- (5,0);
        \draw[->,red] (1,0.5) -- (5,0.5);
        \draw[->,red] (2,0) -- (5,0);
        \draw[dotted] (3,0.5) -- (3,0);
        \filldraw (1,0.5) circle (2pt) node [above=3pt]{$i_2$};
        \filldraw (3,0.5) circle (2pt) node [above=3pt]{$i_1$};
        \filldraw (2,0) circle (2pt) node [below=3pt]{$j_2$};
        \filldraw (3,0) circle (2pt) node [below=3pt,fill=white]{$j_1$};
    \end{tikzpicture}
    \caption{$0\le i_2<j_2<j_1=i_1\le n$ (w.p.\ $s$)}
    \label{fig:i_2<j_2<j_1=i_1}
    \end{subfigure}%
        \begin{subfigure}[c]{0.5\textwidth}
    \centering
    \begin{tikzpicture}[classification]
        \draw[->](0,0.5) node [left=3pt]{$Y_i:$} -- (5,0.5);
        \draw[->](0,0) node [left=3pt]{$X_i:$} -- (5,0);
        \draw[->,red] (2,0.5) -- (5,0.5);
        \draw[->,red] (1,0) -- (5,0);
        \draw[dotted] (3,0.5) -- (3,0);
        \filldraw (2,0.5) circle (2pt) node [above=3pt]{$i_2$};
        \filldraw (3,0.5) circle (2pt) node [above=3pt]{$i_1$};
        \filldraw (1,0) circle (2pt) node [below=3pt]{$j_2$};
        \filldraw (3,0) circle (2pt) node [below=3pt,fill=white]{$j_1$};
    \end{tikzpicture}
    \caption{$0\le j_2<i_2<j_1=i_1\le n$ (w.p.\ $s$)}
    \label{fig:j_2<i_2<j_1=i_1}
    \end{subfigure}
    \par
    \begin{subfigure}[c]{0.5\textwidth}
    \centering
    \begin{tikzpicture}[classification]
        \draw[->](0,0.5) node [left=3pt]{$Y_i:$} -- (5,0.5);
        \draw[->](0,0) node [left=3pt]{$X_i:$} -- (5,0);
        \draw[->,red] (1,0.5) -- (5,0.5);
        \draw[->,red] (2,0) -- (5,0);
        \draw[dotted] (2,0.5) -- (2,0);
        \filldraw (1,0.5) circle (2pt) node [above=3pt]{$i_2$};
        \filldraw (2,0.5) circle (2pt) node [above=3pt]{$i_1$};
        \filldraw (2,0) circle (2pt) node [below=3pt,fill=white]{$j_2$};
        \filldraw (3,0) circle (2pt) node [below=3pt]{$j_1$};
    \end{tikzpicture}%
    \caption{$0\le i_2<j_2=i_1<j_1\le n$ (w.p.\ $t$)}
    \label{fig:i_2<j_2=i_1<j_1}
    \end{subfigure}%
    \begin{subfigure}[c]{0.5\textwidth}
    \centering
    \begin{tikzpicture}[classification]
        \draw[->](0,0.5) node [left=3pt]{$Y_i:$} -- (5,0.5);
        \draw[->](0,0) node [left=3pt]{$X_i:$} -- (5,0);
        \draw[->,red] (2,0.5) -- (5,0.5);
        \draw[->,red] (1,0) -- (5,0);
        \draw[dotted] (2,0.5) -- (2,0);
        \filldraw (2,0.5) circle (2pt) node [above=3pt]{$i_2$};
        \filldraw (3,0.5) circle (2pt) node [above=3pt]{$i_1$};
        \filldraw (1,0) circle (2pt) node [below=3pt]{$j_2$};
        \filldraw (2,0) circle (2pt) node [below=3pt,fill=white]{$j_1$};
    \end{tikzpicture}
    \caption{$0\le j_2<i_2=j_1<i_1\le n$ (w.p.\ $t$)}
    \label{fig:j_2<i_2=j_1<i_1}
    \end{subfigure}
    \par
    \begin{subfigure}[c]{0.5\textwidth}
    \centering
    \begin{tikzpicture}[classification]
        \draw[->](0,0.5) node [left=3pt]{$Y_i:$} -- (5,0.5);
        \draw[->](0,0) node [left=3pt]{$X_i:$} -- (5,0);
        \draw[->,red] (0,0.5) -- (5,0.5);
        \draw[->,red] (1,0) -- (5,0);
        \draw[dotted] (1,0.5) -- (1,0);
        \filldraw (0,0.5) circle (2pt) node [above=3pt]{$-1$};
        \filldraw (1,0.5) circle (2pt) node [above=3pt]{$0$};
        \filldraw (1,0) circle (2pt) node [below=3pt,fill=white]{$0$};
        \filldraw (2,0) circle (2pt) node [below=3pt]{$j_1$};
    \end{tikzpicture}%
    \caption{$-1=i_2<j_2=i_1=0<j_1\le n$ (w.p.\ $u$)}
    \label{fig:-1=i_2<j_2=i_1<j_1}
    \end{subfigure}%
    \begin{subfigure}[c]{0.5\textwidth}
    \centering
    \begin{tikzpicture}[classification]
        \draw[->](0,0.5) node [left=3pt]{$Y_i:$} -- (5,0.5);
        \draw[->](0,0) node [left=3pt]{$X_i:$} -- (5,0);
        \draw[->,red] (1,0.5) -- (5,0.5);
        \draw[->,red] (0,0) -- (5,0);
        \draw[dotted] (1,0.5) -- (1,0);
        \filldraw (1,0.5) circle (2pt) node [above=3pt]{$0$};
        \filldraw (2,0.5) circle (2pt) node [above=3pt]{$i_1$};
        \filldraw (0,0) circle (2pt) node [below=3pt]{$-1$};
        \filldraw (1,0) circle (2pt) node [below=3pt,fill=white]{$0$};
    \end{tikzpicture}
    \caption{$-1=j_2<i_2=j_1=0<i_1\le n$ (w.p.\ $u$)}
    \label{fig:-1=j_2<i_2=j_1<i_1}
    \end{subfigure}
    \par
    \begin{subfigure}[c]{0.5\textwidth}
    \centering
    \begin{tikzpicture}[classification]
        \draw[->](0,0.5) node [left=3pt]{$Y_i:$} -- (5,0.5);
        \draw[->](0,0) node [left=3pt]{$X_i:$} -- (5,0);
        \draw[->,red] (1,0.5) -- (5,0.5);
        \draw[->,red] (1,0) -- (5,0);
        \draw[dotted] (1,0.5) -- (1,0);
        \draw[dotted] (3,0.5) -- (3,0);
        \filldraw (1,0.5) circle (2pt) node [above=3pt]{$i_2$};
        \filldraw (3,0.5) circle (2pt) node [above=3pt]{$i_1$};
        \filldraw (1,0) circle (2pt) node [below=3pt,fill=white]{$j_2$};
        \filldraw (3,0) circle (2pt) node [below=3pt,fill=white]{$j_1$};
    \end{tikzpicture}
    \caption{$0\le i_2=j_2<i_1=j_1\le n$ (w.p.\ $v$)}
    \label{fig:0<i_1=j_1}
    \end{subfigure}%
    \begin{subfigure}[c]{0.5\textwidth}
    \centering
    \begin{tikzpicture}[classification]
        \draw[->](0,0.5) node [left=3pt]{$Y_i:$} -- (5,0.5);
        \draw[->](0,0) node [left=3pt]{$X_i:$} -- (5,0);
        \draw[->,red] (1,0.5) -- (5,0.5);
        \draw[->,red] (1,0) -- (5,0);
        \draw[dotted] (1,0.5) -- (1,0);
        \draw[dotted] (0,0.5) -- (0,0);
        \filldraw (0,0.5) circle (2pt) node [above=3pt]{$-1$};
        \filldraw (1,0.5) circle (2pt) node [above=3pt]{$0$};
        \filldraw (0,0) circle (2pt) node [below=3pt,fill=white]{$-1$};
        \filldraw (1,0) circle (2pt) node [below=3pt,fill=white]{$0$};
    \end{tikzpicture}
    \caption{$-1=i_2=j_2<i_1=j_2=0$ (w.p.\ $w$)}
    \label{fig:v_2<v_1=0}
    \end{subfigure}%
    \caption{Classifications of the relative positions of $i_1,i_2,j_1,j_2$. Red lines indicate locations without success.}\label{fig:Classification}
\end{figure*}

We classify possible outcomes into 16 cases of (a)--(p) as illustrated in Figure~\ref{fig:Classification}. Let $p$ denote the probability that case (a) occurs. By symmetry, the probabilities that cases (b), (c), and (d) occur are also $p$ each. However, the probability that case (g) (and also (h)) occurs is different from $p$. This difference arises because the actual sequence in case (g) may contain successes between $i_1$ and $j_2$. Let us denote $q$ as the probability of case (e) occurring. By symmetry, the probability of case (f) occurring is $q$ as well. Additionally, let $r$ represent the probability of case (g) occurring, then the probability that case (h) occurs is also $r$ due to symmetry.

Consider a realization of the sample and actual sequences. If the realization is classified as being in case (g), then swapping the realizations of $X_{j_2}$ and $Y_{j_2}$ results in case (a), (c), or (e). Conversely, if the realization is classified as being in case (a), (c), or (e), then swapping the realizations of $X_{i_1}$ and $Y_{i_1}$ leads to case (g). As such swaps yield sequences with the same realization probability, we can conclude that
\begin{align}
r=2p+q. \label{eq:r=2p+q}
\end{align}
Similarly, let us denote the probability of case (i) (or case (j)) occurring as $s$ and the probability of case (k) (or case (l)) occurring as $t$, respectively. The relative position of case (m) matches that of case (k), but we categorize them separately since the ASLS policy wins for the case (m). We also consider the case (n) separately from the case (k) for the convenience of the analysis. Let the probability of cases (m) and (n) occurring be denoted as $u$. Moreover, let $v$ be the probability of cases (o) occurring. However, in the case of no success, we separate it and denote it as the case (p) with probability $w$ since the ASLS policy cannot win. Note that there are two or more non-virtual successes for cases (a) to (i) and case (o). In all other cases, non-virtual successes are at most one. Since each realization is classified uniquely into one case in Figure~\ref{fig:Classification}, we obtain
\begin{align}
4p+2q+2r+2s+2t+2u+v+w 
&=8p+4q+2s+2t+2u+v+w=1, \label{eq:total=1}
\end{align}
where the first equality holds by \eqref{eq:r=2p+q}.

Note that $w=\prod_{i=1}^n (1-p_i)^2$ represents the probability that successes occur only in virtual trials. The probability of case (m) occurring can be expressed as 
\begin{align}
u=\prod_{i=1}^n (1-p_i)^2\sum_{j=1}^n\frac{p_j}{1-p_j}=wR.\label{eq:u=wR}
\end{align}
Moreover, $w\cdot\left(\sum_{i=1}^n r_i\right)^2$ corresponds to the probability of having exactly one success in each of the actual and sample sequences, excluding the virtual trials. This situation is classified as case (e), (f), or (o). Thus, we have
\begin{align}
w\cdot\left(\sum_{i=1}^n r_i\right)^2=wR^2\le 2q+v. \label{eq:upper-u}
\end{align}

The probability of the last successes in both the actual and sample sequences having the same index ($i_1=j_1$) is 
\begin{align}
2s+v=\sum_{i=1}^n p_i^2\prod_{j=i+1}^n (1-p_j)^2, \label{eq:same-last}
\end{align}
where the right-hand side is obtained by directly computing the probability, and the left-hand side is derived from the probabilities that cases (i), (j), and (o) occur. The probability that the indices of the second last successes of the actual and the sample sequences are the same (i.e., $i_2=j_2$) is 
\begin{align}
    2q+v-w R^2
    =\sum_{i=1}^n p_i^2 \prod_{j=i+1}^n (1-p_j)^2 \Big(\sum_{j=i+1}^n r_{j}\Big)^2  \label{eq:same-second}
\end{align}
where the right-hand side is obtained by directly computing the probability, and the left-hand side is derived from the probabilities that cases (e), (f), and (o) occur. Moreover, the probability $t$ can be expressed as
\begin{align}
t=\sum_{i=1}^n p_i^2 \prod_{j=i+1}^n (1-p_j)^2 \left(\sum_{j=i+1}^n r_{j}\right). \label{eq:t}
\end{align}
By combining \eqref{eq:same-last}, \eqref{eq:same-second}, and \eqref{eq:t}, we have
\begin{align}
t
&\le\frac{(2s+v)+(2q+v-w\cdot R^2)}{2}
=q+s+v-\frac{w\cdot R^2}{2}, \label{eq:upper-t}
\end{align}
since $(1+(\sum_{j=i+1}^nr_{j})^2)/2\ge \sum_{j=i+1}^n r_{j}$ holds by the AM-GM inequality for every $i\in[n]$.
Since $w$ represents the probability of no success in both the sample and the actual sequence, we have
\begin{align}
    w\le\left(\frac{1}{1+R}\right)^2
    \label{eq:upper-w}
\end{align}
by Lemma~\ref{lem:nosuccess}.

Now, we are ready to demonstrate that the ASLS policy guarantees a winning probability of $1/4$. As depicted in Figure~\ref{fig:Classification}, the ASLS policy wins if the realization is classified as case (b), (c), (e), (f), (j), (m), or (o). Consequently, the winning probability is at least
\begin{align*}
    2p+2q+s+u+v
    &=\frac{8p+4q+2s+2t+2u+v+w}{4}+\frac{4q+2s-2t+2u+3v-w}{4} \hspace{-8mm}&\\
    &=\frac{1}{4}+\frac{4q+2s-2t+2u+3v-w}{4} &&(\text{by \eqref{eq:total=1}})\\
    &\ge\frac{1}{4}+\frac{4q+2s-2(q+s+v-w\cdot R^2/2)+2u+3v-w}{4} &&(\text{by \eqref{eq:upper-t}})\\
    &=\frac{1}{4}+\frac{2q+2u+v+w(R^2-1)}{4}\\
    &=\frac{1}{4}+\frac{2q+v+2wR+w(R^2-1)}{4} && (\text{by \eqref{eq:u=wR}})\\
    &\ge\frac{1}{4}+\frac{wR^2+2wR+w(R^2-1)}{4} &&(\text{by \eqref{eq:upper-u}})\\
    &=\frac{1}{4}+\frac{w(2R^2+2R-1)}{4}.
\end{align*}
Here, $(2R^2+2R-1)$ is non-negative if and only if $R\ge (\sqrt{3}-1)/2$. Thus, the winning probability is at least $1/4$ if $R$ is at least $(\sqrt{3}-1)/2$ by $w\ge 0$. Also, if $R<(\sqrt{3}-1)/2$, the winning probability is at least $\frac{1}{4}+\frac{2R^2+2R-1}{4(1+R)^2}=\frac{R(4+3R)}{4(1+R)^2}$ by \eqref{eq:upper-w}.
\end{proof}

For the $(1,R)$-last success problem, the winning probability of any policy is at most $1/4$ (Theorem~\ref{thm:NV}) and at most $R/e^R$ if $R\le 1$ (Proposition~\ref{prop:bruss}). Introducing $\alpha\approx0.357$ such that $\alpha/e^\alpha=1/4$, the upper bound is summarized as $R/e^R$ for $R\le \alpha$ and $1/4$ for $R\ge \alpha$. Consequently, the ASLS policy attains nearly the best possible winning probability for any $R$, as illustrated in Figure~\ref{fig:ASLS}. 

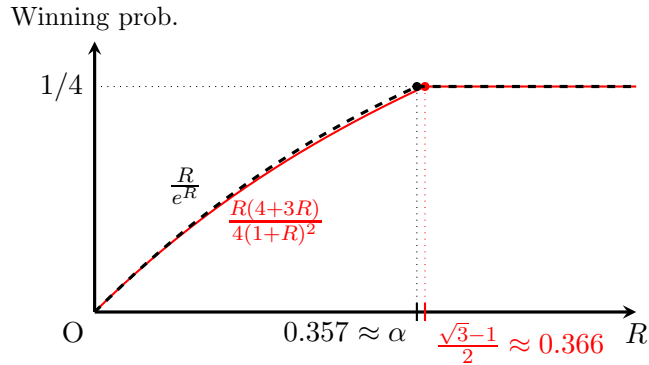
\begin{figure}[ht]
    \centering
    \begin{tikzpicture}[scale=12]
    \draw[->,>=stealth,very thick] (0,0) --(0.6,0) node[below]{$R$}; 
    \draw[->,>=stealth,very thick] (0,0)--(0,0.25) node[left]{$1/4$}--(0,0.3) node[above, font=\small]{Winning prob.}; 
    \draw (0,0) node[below left]{O};

    \draw[dotted] (0,0.25)--(0.6,0.25);

    \draw[domain=0:0.366, draw=red, thick] plot(\x, {1/4+(2*pow(\x,2)+2*\x-1)/(4+8*\x+4*pow(\x,2))});
    \draw[draw=red, thick] (0.366, 0.25)--(0.6,0.25);
    \draw[draw=red, dotted] (0.366,0.25)--(0.366,0) node[below right,red]{$\frac{\sqrt{3}-1}{2}\approx 0.366$};
    \draw[thick,red] (0.366,0.01) -- (0.366,-0.01);
    \fill[red] (0.366,0.25) circle (0.005);
    \node[red] at (0.2,0.1) {$\frac{R(4+3R)}{4(1+R)^2}$};

    \draw[domain=0.0001:0.357, very thick, dashed] plot(\x, \x/exp{\x});
    \draw[very thick, dashed] (0.357,0.25) -- (0.6,0.25);
    \draw[dotted] (0.357,0.25)--(0.357,0) node[below left] {$0.357\approx\alpha$};
    \draw[thick] (0.357,0.01) -- (0.357,-0.01);
    \fill (0.357,0.25) circle (0.005);
    \node at (0.1,0.14) {$\frac{R}{e^R}$};
    \end{tikzpicture}
    \caption{The winning probability of the ASLS policy (red line) and an upper bound of any policy for the $(1,R)$-last success problem (black dashed line).}
    \label{fig:ASLS}
\end{figure}

\subsection{Applying a policy by Nuti and Vondr\'{a}k}\label{subsec:NV}
In this subsection, we examine a randomized policy for the $(m,R)$-last success problem derived from the $1/4$-competitive policy for the adversarial order single sample secretary problem by Nuti and Vondr\'{a}k~\cite{NV23}. We demonstrate that the policy achieves a winning probability of $1/4$ only if $R$ is at least $1/2$. This means that the ASLS policy is superior in both performance and deterministic nature.

The adversarial order single sample secretary problem is described as follows: A sequence of independent real-valued random variables $V_1,V_2,\ldots,V_n$ is revealed one by one. Upon observing the value of $V_i$, an immediate and irrevocable decision must be made to either halt the observation or continue other subsequent. It is assumed that the distribution that each random variable $V_i$ follows is unknown, but a prior sample $W_i$ that follows the same distribution as $V_i$ is given for each $i\in[n]$. The goal is to maximize the probability of stopping at the highest value among $V_i$'s. The single sample last success problem can be reduced to the adversarial order single sample secretary problem by considering a distribution where $V_i=i$ with probability $p_i$ and $0$ with probability $1-p_i$, if we treat the no-success scenario as a win (or at least one success appears with probability $1$).

For the adversarial order single sample secretary problem, Nuti and Vondr\'{a}k~\cite{NV23} proposed the following simple policy: set the maximum value in the sample sequence as the threshold, and then stop when a value exceeding the threshold is observed. 
Here, if two values are the same, we make a random tie-break.\footnote{See, e.g., the paper of Rubinstein et al.~\cite{RWW20} for more details on this tie-break. Note that without such a random tie-break, the winning probability of this policy is at most $(1-e^{-4})/4$ by \ref{thm:FLS}.} This policy wins with a probability of at least $1/4$ as follows~\cite{NV23}: suppose the largest two numbers that appear in $V_1,\dots,V_n,W_1,\dots,W_n$ are $a_1,a_2$. Then, the policy wins if $a_2$ comes from $W_1,\dots,W_n$ and $a_1$ comes from $V_1,\dots,V_n$. Thus, it wins with probability $1/4$ if 
 $a_1,a_2$ come from different trials, and with probability $1/2$ if $a_1,a_2$ come from the same trial.

This policy leads to the following randomized policy for the single sample last success problem: set the threshold index $\tau$ either to the index of the last success or to the next index of the last success in the sample sequence, each with a probability of $1/2$. If no success appears in the sample sequence, it sets $\tau$ to be $1$. We refer to this policy as the \emph{From the Last Success, Randomized (FLSR)} policy. We demonstrate that the FLSR policy guarantees a winning probability of $1/4$ only if $R\ge 1/2$, which is a stronger condition than the one of the ASLS policy. We remark that the above elegant analysis conducted by Nuti and Vondr\'{a}k is not applicable to the FLSR policy if $R<\infty$. This is because a no-success scenario $(X_1=\dots=X_n=0)$ results in a loss in the last success problem, whereas its corresponding scenario ($V_1=\dots=V_n=0$) leads to a win in the reduced adversarial order secretary problem.
\begin{restatable}{thm}{FLSR}
    \label{thm:FLSR}
    The winning probability of the FLSR policy for the $(1,R)$-last success problem is $1/4$ when $R\ge 1/2$ and at least $\frac{1}{4}-\frac{1-2R}{4(1+R)^2}$ when $R\le 1/2$.
    Moreover, the winning probability is strictly less than $1/4$ if $R<1/2$.
\end{restatable}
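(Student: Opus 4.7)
My plan is to mimic the classification-based analysis of Theorem~\ref{thm:ASLS}, reusing the sixteen cases (a)--(p) of Figure~\ref{fig:Classification} together with the probabilities $p, q, r, s, t, u, v, w$ and the identities derived there. Recall that the FLSR policy picks $\tau \in \{i_1, i_1 + 1\}$ each with probability $1/2$, with the convention $\tau = 1$ when $i_1 = 0$.

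The first (and main) step is to determine, for each of the sixteen cases, the conditional winning probability of FLSR. A direct inspection shows that FLSR wins with probability $1$ in cases (a), (c), (e), (m); with probability $1/2$ in cases (i), (j), (k), (o); and loses in all other cases. The non-trivial points are: in cases (a), (c), (e) we have $j_2 < i_1 < j_1$, so $X_{i_1} = 0$ and both choices of $\tau$ make the policy stop at $j_1$; in case (g) the intermediate success at $j_2 \in (i_1, j_1)$ in $X$ forces a premature stop at some index $\le j_2 < j_1$ regardless of the threshold, so both halves lose; cases (i), (j), (o) have $i_1 = j_1$, so $\tau = i_1$ stops at $j_1$ (win) while $\tau = i_1 + 1$ sees no success beyond (loss); case (k) has $i_1 = j_2$, so the roles reverse ($\tau = i_1$ stops at $j_2$ and loses; $\tau = i_1 + 1$ stops at $j_1$ and wins); case (m) triggers $\tau = 1$, which captures the unique real success $j_1$.

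Summing the per-case contributions and then using identities~\eqref{eq:total=1} and~\eqref{eq:u=wR} yields
\[
\text{winning probability} = 2p + q + u + s + \tfrac{t+v}{2} = \tfrac{1}{4} + \tfrac{2u + 2s + v - w}{4} = \tfrac{1}{4} + \tfrac{w(2R - 1) + (2s + v)}{4}.
\]
For $R \ge 1/2$, both $w(2R-1)$ and $2s + v$ are non-negative, so the winning probability is at least $1/4$. Combining this with Theorem~\ref{thm:NV} (applied to instances in $\mathcal{I}_{1, \infty} \subseteq \mathcal{I}_{1, R}$, which forces the infimum to be at most $1/4$) gives equality. For $R \le 1/2$, dropping $2s + v \ge 0$ and applying Lemma~\ref{lem:nosuccess} in the form $w \le 1/(1+R)^2$ (together with $2R - 1 \le 0$) yields the bound $\tfrac{1}{4} - \tfrac{1 - 2R}{4(1+R)^2}$.

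For the strict inequality when $R < 1/2$, I would exhibit an explicit family of instances: take $n$ trials with $p_1 = \dots = p_n = R/(n + R)$, so that $\sum_i r_i = R$ exactly. Then $w = (n/(n+R))^{2n} \to e^{-2R}$ and $2s + v \le \sum_i p_i^2 = nR^2/(n+R)^2 \to 0$ as $n \to \infty$, so the winning probability tends to $\tfrac{1}{4} - \tfrac{(1-2R) e^{-2R}}{4} < \tfrac{1}{4}$, showing that the infimum over $\mathcal{I}_{1, R}$ is strictly below $1/4$. The hardest part is the case-by-case verification in the first step: FLSR behaves subtly differently from ASLS across the cases (especially (g), and the asymmetry between (k) and (l) depending on whether the sample's last success $i_1$ coincides with the actual's second-to-last $j_2$ or with the actual's last $j_1$), and most of the work lies in getting each case right. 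Once the formula $2p + q + u + s + (t+v)/2$ is established, the remaining algebra closely parallels the ASLS proof.
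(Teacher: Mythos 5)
Your proposal is correct and follows essentially the same route as the paper: the same case-by-case determination of FLSR's conditional win probabilities over the classification of Figure~\ref{fig:Classification} (probability $1$ on (a), (c), (e), (m); probability $1/2$ on (i), (j), (k), (o)), the same identity $2p+q+s+u+\tfrac{t+v}{2}=\tfrac14+\tfrac{2s+v+w(2R-1)}{4}$ via \eqref{eq:total=1} and \eqref{eq:u=wR}, and the same use of Lemma~\ref{lem:nosuccess} for $R\le 1/2$. Your write-up actually goes slightly further than the paper's, which only establishes the lower bounds: you correctly supply the matching upper bound for $R\ge 1/2$ via Theorem~\ref{thm:NV} and a valid explicit instance family ($p_i=R/(n+R)$, $n\to\infty$) witnessing strictness for $R<1/2$.
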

\begin{proof}
Similar to the proof of Theorem~\ref{thm:FLSR}, we utilize the classification illustrated in Figure~\ref{fig:Classification}. The FLSR policy wins with probability $1$ if the realization is classified into cases (a), (c), (e), or (m). Moreover, it wins with probability $1/2$ if the realization is classified into cases (i), (j), (k), (m), or (o). As a result, the FLSR policy wins with probability at least
\begin{align*}
    2p+q+s+\frac{1}{2}t+u+\frac{1}{2}v&=\frac{8p+4q+2s+2t+2u+v+w}{4}+\frac{2s+2u+v-w}{4}\\
    &=\frac{1}{4}+\frac{2s+2u+v-w}{4} &&(\text{by \eqref{eq:total=1}})\\
    &=\frac{1}{4}+\frac{2s+v}{4}+\frac{2u-w}{4}\\
    &=\frac{1}{4}+\frac{2s+v}{4}+\frac{w(2R-1)}{4} &&(\text{by \eqref{eq:u=wR}}).
\end{align*}
Thus, the winning probability is at least $1/4$ if $R\ge 1/2$. If $R<1/2$, the winning probability is at least $\frac{1}{4}-\frac{1-2R}{4(1+R)^2}$ because $s,v\ge 0$ and $w\le 1/(1+R)^2$ by Lemma~\ref{lem:nosuccess}.
\end{proof}

\section{Multiple Sample Model}
In this section, we examine the last success problem with $m$ samples for general $m$. Let $\OPT(R)$ be the optimal winning probability of the last success problem with complete information about the distributions when the sum of the odds is at least $R$. As mentioned in Proposition~\ref{prop:bruss}, $\OPT(R)=R/e^R$ if $R\le 1$ and $\OPT(R)=1/e$ if $R\ge 1$. For any positive constant $\epsilon$, we propose a stopping policy that guarantees a winning probability of $\OPT(R)-\epsilon$ with a constant number of samples. Formally, we prove the following theorem.
\begin{thm}
    There exists a stopping policy that guarantees a winning probability of $\OPT(R)-O(1/\sqrt[4]{m})$ for the $(m,R)$-last success problem.
    \label{thm:mltsample}
\end{thm}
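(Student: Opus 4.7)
The plan is to sample-efficiently approximate Bruss's policy without directly estimating the individual $p_k$'s, which would accumulate $n$-dependent error. The key idea is that the optimal threshold is characterized by the index at which the tail survival probability $Q_i := \prod_{k=i}^n(1-p_k)$ crosses $1/e$, and $Q_i$ itself can be estimated from the $m$ samples via an $n$-independent concentration argument. Concretely, for each sample copy $j\in[m]$ I define the Bernoulli indicator
\[
\hat Q_i^{(j)} := \prod_{k=i}^n(1-Y_k^{(j)}) \in \{0,1\},
\]
whose expectation is exactly $Q_i$, and let $\bar Q_i := \frac{1}{m}\sum_{j=1}^m \hat Q_i^{(j)}$. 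The policy chooses $i^*$ to be the largest index $i$ with $\bar Q_i \le 1/e + \delta/2$ (with $i^*=1$ as a fallback if no such $i$ exists), for a tolerance $\delta := m^{-1/4}$, and stops at the first observed success at or after trial $i^*$.

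The technical heart of the argument is a uniform concentration bound that is independent of $n$. For each fixed $j$, the map $i\mapsto\hat Q_i^{(j)}$ is a non-decreasing step function jumping from $0$ to $1$ at $\tilde i_j+1$, where $\tilde i_j := \max\{k : Y_k^{(j)}=1\}$ (with $\tilde i_j := 0$ if no sample success occurs). Hence $\bar Q_i$ is precisely the empirical CDF of the i.i.d.\ random variables $\tilde i_1,\ldots,\tilde i_m$ evaluated at $i-1$, and the Dvoretzky--Kiefer--Wolfowitz inequality (the martingale property alluded to in the outline, provable by a Doob-type argument on the empirical process) gives
\[
\Pr\Bigl[\sup_{i}|\bar Q_i-Q_i|>\delta/2\Bigr] \le 2 e^{-m\delta^2/2} = 2 e^{-\sqrt m/2},
\]
with no dependence on $n$.

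On the resulting good event, if the algorithm selects some $i^*\ge 1$, then $Q_{i^*}\le 1/e+\delta$ and $Q_{i^*+1}\ge 1/e$ (using the convention $Q_{n+1}=1$), and the winning probability
\[
W = Q_{i^*}R_{i^*} = p_{i^*}Q_{i^*+1} + (1-p_{i^*})Q_{i^*+1}R_{i^*+1}
\]
can be bounded below by $1/e-O(\delta)$ through a short case analysis on $R_{i^*+1}$. When $R_{i^*+1}\ge 1$, the elementary inequality $p+(1-p)R\ge 1$ gives $W\ge Q_{i^*+1}\ge 1/e$. When $R_{i^*+1}<1$, $W$ is linear and increasing in $p_{i^*}$, so its minimum is attained at the smallest admissible $p_{i^*}$; combining this with Lemma~\ref{lem:nosuccess} ($R_{i^*+1}\ge -\log Q_{i^*+1}$) and the condition $Q_{i^*}\le 1/e+\delta$ yields $W\ge 1/e - O(\delta)$. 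In the fallback case, all $\bar Q_i$ exceed $1/e+\delta/2$, forcing $Q_1>1/e$, and stopping at the first observed success gives $W=Q_1 R_1 \ge \OPT(R)$: for $R\ge 1$ use $Q_1>1/e$ together with $R_1\ge 1$, and for $R\le 1$ use $Q_1\ge e^{-R_1}$ (Lemma~\ref{lem:nosuccess}) so that $W\ge R_1 e^{-R_1}=\OPT(R)$. Absorbing the exponentially small concentration failure yields $W\ge\OPT(R)-O(m^{-1/4})$.

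I expect the main obstacle to be the tightness of the case analysis when $Q_{i^*}$ is much smaller than $1/e$, which can only happen when $p_{i^*}$ is close to $1$. In that regime the constraint $(1-p_{i^*})Q_{i^*+1}=Q_{i^*}\le 1/e+\delta$ is not strong enough to pin $1-p_{i^*}$ near $0$ unless $Q_{i^*+1}$ itself is close to $1$, so one must instead exploit the linearity of $W$ in $p_{i^*}$ for fixed $Q_{i^*+1}$ and $R_{i^*+1}$ and observe that the worst case $W(p_{i^*}=0)=Q_{i^*+1}R_{i^*+1}$ is lower bounded by $-Q_{i^*+1}\log Q_{i^*+1}$, which is close to $1/e$ whenever $Q_{i^*+1}$ is close to $1/e$. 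A clean piecewise computation over the constraint region $\{Q_{i^*+1}\ge 1/e,\ (1-p_{i^*})Q_{i^*+1}\le 1/e+\delta\}$ then delivers the required $1/e-O(\delta)$ bound.
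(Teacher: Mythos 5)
Your proposal is correct, and the policy itself is essentially the paper's: estimate the tail no-success probabilities $Q_i$ by the fraction of sample sequences with no success from $i$ onward, place the threshold where the estimate crosses $1/e$ plus a tolerance, and reduce the guarantee to the sandwich $Q_{\hat i}\le 1/e+O(\delta)$ and $Q_{\hat i+1}\ge 1/e$ together with a pointwise bound on the conditional winning probability (your case split on $R_{i^*+1}\gtrless 1$ is a reorganization of the computation in Lemma~\ref{lem:winprob}, which instead minimizes $x-\log(ex)/e$ over $x\in[1/e,1]$; both work). Where you genuinely diverge is the uniform concentration step. The paper shows that the relative errors $D_i=(Q_i-\hat Q_i)/Q_i$ form a martingale in decreasing $i$ and applies Doob's maximal inequality to $\abs{D_i}$, which yields only the polynomial tail $\sqrt{e/(m\epsilon^2)}$ and is precisely what caps the rate at $m^{-1/4}$ after balancing. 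You instead observe that $\bar Q_i$ is the empirical CDF of the i.i.d.\ last-sample-success positions $\tilde i_j$ evaluated at $i-1$, so the Dvoretzky--Kiefer--Wolfowitz inequality gives an $n$-independent \emph{additive} uniform deviation bound with an exponential tail $2e^{-m\delta^2/2}$. This is both simpler (the additive bound makes the sandwich of Lemma~\ref{lem:boundPs} immediate, with no relative-error manipulation) and quantitatively stronger: with $\delta=\Theta(\sqrt{\log m/m})$ your argument would give $\OPT(R)-O(\sqrt{\log m/m})$, strictly better than the stated $O(1/\sqrt[4]{m})$, and your conservative choice $\delta=m^{-1/4}$ still recovers the claimed bound. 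The only nit is in the fallback case, where you write $W\ge R_1e^{-R_1}=\OPT(R)$: since the instance only promises that the actual odds sum $R_1$ is at least the parameter $R$, you should add that $xe^{-x}$ is increasing on $[0,1]$ to conclude $R_1e^{-R_1}\ge Re^{-R}=\OPT(R)$ when $R_1\le 1$.
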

We emphasize that the number of samples required does not depend on the number of trials $n$. In addition, by Proposition~\ref{prop:incomplete}, no policy can guarantee a winning probability of $1/e$ with a finite number of samples.

We propose a threshold-based policy with such a winning probability. The threshold is determined by the samples as follows. For each $i\in[n]$, define $Q_i$ to be $\prod_{k=i}^n(1-p_k)$, which is the probability that the event $X_i=X_{i+1}=\dots=X_n=0$ occurs. For each $i\in [n]$ and $j\in [m]$, let $Y_{i,j}\in\{0,1\}$ be $j$th sample from $i$th distribution. The sequence $(Y_{1,j},Y_{2,j},\dots,Y_{n,j})$ is said to be $j$th sample sequence. For each $i\in[n]$, let $T_i$ be the number of sample sequences where no success is observed from $i$ to $n$, i.e., $T_i=|\{j\in[m]\mid Y_{i,j}=Y_{i+1,j}=\dots=Y_{n,j}=0\}|$. Since the event $Y_{i,j}=Y_{i+1,j}=\dots=Y_{n,j}=0$ happens with probability $Q_i$, the random variable $T_i$ follows a binomial distribution characterized by number $m$ and probability $Q_i$. We also define $\hat{Q}_i$ as $T_i/m$, which is an unbiased estimator of $Q$.

Let $\epsilon$ be a real such that $0<\epsilon<1/2$. Our policy first calculates the index $\hat{i}=\argmin\{i\in[n]\mid\hat{Q}_{i+1}\ge1/e+\epsilon\}$ from samples and then stops at the first success observed from $\hat{i}$ in the actual trials. Here, we assume that $\hat{Q}_{n+1}=1$. 

We prepare some lemmas to prove the theorem. Firstly, we show that the winning probability of a threshold-based policy is close to $1/e$ if the threshold is an index $i$ such that $Q_i$ is approximately $1/e$.
\begin{restatable}{lemma}{winprob}
    \label{lem:winprob}
    Let $i\in[n]$ and let $\delta$ be a positive real. If $Q_i\le1/e+\delta$ and $Q_{i+1}\ge1/e$, the winning probability of the stopping policy that stops at the first success from the index $i$ is at least $1/e-\delta$.
\end{restatable}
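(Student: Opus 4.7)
The plan is to express the winning probability $W_i$ in closed form and then bound it below using Lemma~\ref{lem:nosuccess}. Writing $r_j = p_j/(1-p_j)$, a direct expansion gives
\begin{align*}
W_i \;=\; \sum_{j=i}^n \Big(\prod_{k=i}^{j-1}(1-p_k)\Big)\, p_j \Big(\prod_{k=j+1}^n(1-p_k)\Big) \;=\; Q_i \sum_{j=i}^n r_j.
\end{align*}
I would then peel off the $j=i$ summand. Using $Q_i = (1-p_i)Q_{i+1}$ one checks that $Q_i r_i = p_i Q_{i+1} = Q_{i+1} - Q_i$, which yields the more convenient identity
\begin{align*}
W_i \;=\; (Q_{i+1} - Q_i) \;+\; Q_i \sum_{j=i+1}^n r_j.
\end{align*}

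Next I would apply Lemma~\ref{lem:nosuccess} to the indices $\{i+1,\ldots,n\}$ to obtain $\sum_{j=i+1}^n r_j \ge -\ln Q_{i+1}$. The hypothesis $Q_{i+1}\ge 1/e$ makes the coefficient $1+\ln Q_{i+1}$ non-negative, so substituting this bound preserves the direction of the inequality and gives $W_i \ge Q_{i+1} - Q_i(1+\ln Q_{i+1})$. Because that coefficient is non-negative, the upper bound $Q_i \le 1/e+\delta$ can then be substituted in the correct direction, producing $W_i \ge F(Q_{i+1})$, where $F(y) := y - (1/e+\delta)(1+\ln y)$.

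It remains to minimize $F$ over $y\in[1/e,1]$. Since $F''(y) = (1/e+\delta)/y^2 > 0$, the function is strictly convex with unique critical point $y^\star = 1/e+\delta$. When $\delta\le 1-1/e$ the minimum is $F(y^\star) = -(1/e+\delta)\ln(1/e+\delta)$; when $\delta>1-1/e$, $F$ is strictly decreasing on $[1/e,1]$ and the minimum is $F(1) = 1-1/e-\delta$. Each of these is at least $1/e-\delta$: the second reduces to $e\ge 2$, and for the interior case the difference $h(\delta) := -(1/e+\delta)\ln(1/e+\delta)-(1/e-\delta)$ satisfies $h(0)=0$ and $h'(\delta) = -\ln(1/e+\delta)\ge 0$ on $[0,\,1-1/e]$, so $h\ge 0$ throughout.

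The step requiring the most care is keeping the direction of each inequality correct when plugging bounds in, and this is where the hypothesis $Q_{i+1}\ge 1/e$ does all the work: it is precisely what guarantees $1+\ln Q_{i+1}\ge 0$, which in turn allows both the Lemma~\ref{lem:nosuccess} lower bound on $\sum_{j=i+1}^n r_j$ and the upper bound $Q_i\le 1/e+\delta$ to be substituted in the right way. Once that sign control is in hand, everything collapses to one-variable convex optimization followed by the elementary calculus comparison above. Edge cases such as $p_i = 1$ (where the product form for $W_i$ is $0\cdot\infty$) are handled cleanly by the rewritten form $W_i = (Q_{i+1}-Q_i) + Q_i\sum_{j=i+1}^n r_j$, which remains well-defined.
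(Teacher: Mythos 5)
Your proof is correct and follows essentially the same route as the paper's: the same identity $W_i=(Q_{i+1}-Q_i)+Q_i\sum_{j>i}r_j$ (the paper writes it as $p_iQ_{i+1}+(1-p_i)Q_{i+1}R_{i+1}$), the same use of Lemma~\ref{lem:nosuccess} to get $\sum_{j>i}r_j\ge-\ln Q_{i+1}$, the same sign control via $1+\ln Q_{i+1}\ge0$, and in fact the very same function $F(y)=y-(1/e+\delta)(1+\ln y)$ to minimize at the end. The only cosmetic difference is the last calculus step: you minimize $F$ by convexity with a case split on $\delta$, while the paper first drops the term $\delta\,\ln(eQ_{i+1})\le\delta$ and then uses monotonicity of $x-\ln(ex)/e$ on $[1/e,1]$; both are valid.
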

\begin{proof}
    By the assumption that $Q_{i+1}=\prod_{k=i+1}^n(1-p_k)\ge 1/e$, the probabilities $p_{i+1},p_{i+2},\dots,p_n$ are strictly less than $1$.
    As the policy wins if there is exactly one success in $X_i,X_{i+1},\dots,X_n$, 
    the winning probability is
    \begin{align*}
    \MoveEqLeft
        p_i\prod_{k=i+1}^n (1-p_k)+(1-p_i)\sum_{k=i+1}^n \left(\frac{p_k}{1-p_k}\prod_{\ell=i+1}^n(1-p_\ell)\right)
        =p_iQ_{i+1}+(1-p_i)Q_{i+1}R_{i+1}.
    \end{align*}
    By applying Lemma~\ref{lem:nosuccess} for $p_{i+1},p_{i+2},\dots,p_n$, we have $1/e^{R_{i+1}}\le Q_{i+1}$, and hence 
    \begin{align}
    R_{i+1}\ge -\log Q_{i+1}. \label{eq:RlogQ}
    \end{align}
    Moreover, by the assumption that $(1-p_i)Q_{i+1}=Q_i\le 1/e+\delta$, we have 
    \begin{align}
        p_iQ_{i+1}\ge Q_{i+1}-1/e-\delta. \label{eq:remove_pi}
    \end{align}
    Thus, the winning probability is at least 
    \begin{align*}
        p_iQ_{i+1}+(1-p_i)Q_{i+1}R_{i+1}
        &\ge p_iQ_{i+1}-(1-p_i)Q_{i+1}\log Q_{i+1}\\
        &=p_iQ_{i+1}\log (eQ_{i+1})-Q_{i+1}\log Q_{i+1}\\
        &\ge (Q_{i+1}-1/e-\delta)\log (eQ_{i+1})-Q_{i+1}\log Q_{i+1}\\
        &=Q_{i+1}-\frac{\log(eQ_{i+1})}{e}-\delta\log(eQ_{i+1})\\
        &\ge Q_{i+1}-\frac{\log(eQ_{i+1})}{e}-\delta\\
        &\ge\min_{x\in[1/e,\,1]}\left(x-\frac{\log(ex)}{e}\right)-\delta=\frac{1}{e}-\delta.
    \end{align*}
    Here, the first and second inequalities follow from \eqref{eq:RlogQ} and \eqref{eq:remove_pi}, respectively.
    The third and fourth inequalities hold by $Q_{i+1}\in [1/e,\, 1]$.
    The last equality comes from the fact that $x-\log(ex)/e$ is monotone increasing for $x\ge 1/e$, as its derivative is $1-1/(ex)$.
\end{proof}

Secondly, we evaluate the probability that $\hat{i}$ satisfies the condition of Lemma~\ref{lem:winprob} by showing that $\hat{Q}_i$ and $Q_i$ are close with high probability. Let $D_i=(Q_i-\hat{Q}_i)/Q_i$, which represents the relative difference of $\hat{Q}_i$ and $Q_i$. To bound the maximum deviation, we use Doob's inequality (see, e.g., \cite{D19}), which states that, for any $\lambda>0$ and any non-negative submartingale\footnotemark{} $S_1,S_2,\dots,S_n$, the following inequality holds:
\begin{align}
    \Pr[\max_{k=1}^nS_k\ge\lambda]\le \mathbb{E}[S_n]/\lambda.
    \label{eq:doob}
\end{align}
\footnotetext{A sequence of random variables $S_1,\dots,S_n$ is called
a \emph{martingale} if $\mathbb{E}[S_{i+1}\mid S_1,\dots,S_i]= S_i$ for all $i\in[n-1]$, and a \emph{submartingale} if $\mathbb{E}[S_{i+1}\mid S_1,\dots,S_i]\ge S_i$ for all $i\in[n-1]$.}

Let $i^*\in[n]$ be the minimum index $i$ such that $Q_{i+1}\ge 1/e$, where we assume that $Q_{n+1}=1$. Note that we have (i) $Q_1\le Q_{i^*}<1/e$ or (ii) $Q_{1}\ge 1/e$ and $i^*=1$. Additionally, we have $p_{i^*+1},p_{i^*+2},\ldots,p_n<1$ by $Q_{i^*+1}=\prod_{k=i^*+1}^n (1-p_k)\ge 1/e>0$. We show that $D_n\ldots,D_{i^*+1}$ is a martingale. 
\begin{restatable}{lemma}{submartingale}
    \label{lem:submartingale}
    The sequence $D_n,\ldots,D_{i^*+1}$ is a martingale. 
\end{restatable}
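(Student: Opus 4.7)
The plan is to verify the martingale identity $\mathbb{E}[D_i \mid D_{i+1}, \dots, D_n] = D_{i+1}$ for every $i^*+1 \le i \le n-1$, with respect to the natural reverse-time filtration in which $D_n$ is revealed first and $D_{i^*+1}$ last. Writing $D_i = 1 - \hat Q_i/Q_i$ and remembering that each $Q_i$ is a deterministic constant, this reduces to showing $\mathbb{E}[\hat Q_i/Q_i \mid \hat Q_{i+1}, \dots, \hat Q_n] = \hat Q_{i+1}/Q_{i+1}$. The definition of $i^*$ forces $Q_{i^*+1} \ge 1/e > 0$, hence $p_k < 1$ for every $k \ge i^*+1$, which makes every $Q_i$ in the relevant range strictly positive and every $D_i$ well-defined.

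The structural observation driving the computation is that the sample subsets $S_i = \{j \in [m] : Y_{i,j} = Y_{i+1,j} = \dots = Y_{n,j} = 0\}$ are nested, with $S_i = \{j \in S_{i+1} : Y_{i,j} = 0\}$ and $T_i = |S_i|$. Because the column $Y_{i,1}, \dots, Y_{i,m}$ is independent of all other sample columns and each $Y_{i,j}$ is Bernoulli with parameter $p_i$, conditioning on the entire configuration $(Y_{k,j})_{k \ge i+1, j\in[m]}$ — which determines $S_{i+1}, \dots, S_n$ — leaves $T_i$ distributed as $\mathrm{Binomial}(T_{i+1}, 1-p_i)$. Taking expectations and using the tower property yields $\mathbb{E}[T_i \mid T_{i+1}, \dots, T_n] = (1-p_i) T_{i+1}$, hence $\mathbb{E}[\hat Q_i \mid \hat Q_{i+1}, \dots, \hat Q_n] = (1-p_i)\hat Q_{i+1}$. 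Dividing by the deterministic relation $Q_i = (1-p_i)Q_{i+1}$ (permissible because $1-p_i > 0$ for $i \ge i^*+1$) gives $\mathbb{E}[\hat Q_i/Q_i \mid \cdots] = \hat Q_{i+1}/Q_{i+1}$, and therefore $\mathbb{E}[D_i \mid D_{i+1}, \dots, D_n] = D_{i+1}$, which is exactly the martingale property for the reverse-time sequence.

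The only delicate point is ensuring that conditioning on the full reverse-time history $\hat Q_n, \dots, \hat Q_{i+1}$ does not add information beyond what $T_{i+1}$ already carries about the distribution of $T_i$. This is precisely where independence of the $i$th sample column from all other columns enters: given $S_{i+1}$ the conditional law of $T_i$ depends only on $|S_{i+1}| = T_{i+1}$, so one more application of the tower property collapses the extra conditioning. No integrability concerns arise since each $D_i$ is bounded ($|\hat Q_i/Q_i| \le e$ on the relevant range), so I do not foresee further technical obstacles.
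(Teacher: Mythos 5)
Your proposal is correct and follows essentially the same route as the paper: both reduce the claim to the identity $\mathbb{E}[\hat Q_i \mid \hat Q_{i+1},\dots,\hat Q_n] = (1-p_i)\hat Q_{i+1}$ and then divide by the deterministic relation $Q_i=(1-p_i)Q_{i+1}$. The paper states that identity with almost no justification, whereas you supply the underlying reason (nested survivor sets, the conditional $\mathrm{Binomial}(T_{i+1},1-p_i)$ law, and the tower property to collapse the full reverse-time conditioning), so your write-up is simply a more detailed version of the same argument.
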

\begin{proof}
    Let $i\in\{i^*+1,\dots,n\}$. 
    By the definition of $\hat{Q}_i$, we obtain $\mathbb{E}[\hat{Q}_i\mid\hat{Q}_{i+1}]=(1-p_i)\hat{Q}_{i+1}$. Note that $Q_i$ for each $i$ is a constant determined by the instance. Thus, we have
    \begin{align*}
        \mathbb{E}[D_i\mid D_{i+1},\ldots,D_n]=\mathbb{E}[D_i\mid D_{i+1}]
        &=\frac{(1-p_i)Q_{i+1}-\mathbb{E}[\hat{Q}_i\mid\hat{Q}_{i+1}]}{(1-p_i)Q_{i+1}}\\
        &=\frac{(1-p_i)Q_{i+1}-(1-p_i)\hat{Q}_{i+1}}{(1-p_i)Q_{i+1}}
        =D_{i+1}.
    \end{align*}
\end{proof}
From this lemma, we can conclude that the sequence of random variables $\abs{D_n},\ldots,\abs{D_{i^*+1}}$ is a non-negative submartingale. By applying the Doob's inequality \eqref{eq:doob} to $\abs{D_n},\ldots,\abs{D_{i^*+1}}$, we can obtain the following inequality.
\begin{restatable}{lemma}{unionone}
\label{lem:unionone}
$\Pr[\max_{k=i^*+1}^nD_k\ge\epsilon]\le\sqrt{\frac{e}{m\epsilon^2}}$. 
\end{restatable}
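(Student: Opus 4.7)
The plan is a direct application of Doob's maximal inequality \eqref{eq:doob} to the non-negative submartingale $|D_n|, |D_{n-1}|, \ldots, |D_{i^*+1}|$ that the paper has already identified. Since $\{\max_k D_k \ge \epsilon\} \subseteq \{\max_k |D_k| \ge \epsilon\}$, invoking Doob's inequality gives
\[
\Pr\!\left[\max_{k=i^*+1}^n D_k \ge \epsilon\right]
\le \Pr\!\left[\max_{k=i^*+1}^n |D_k| \ge \epsilon\right]
\le \frac{\mathbb{E}[|D_{i^*+1}|]}{\epsilon},
\]
because $|D_{i^*+1}|$ is the terminal element of the submartingale under the filtration that runs from $D_n$ (coarsest) to $D_{i^*+1}$ (finest). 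It therefore suffices to show $\mathbb{E}[|D_{i^*+1}|] \le \sqrt{e/m}$.

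To bound this expectation, I would pass to the $L^2$ norm via Jensen's inequality, $\mathbb{E}[|D_{i^*+1}|] \le \sqrt{\mathbb{E}[D_{i^*+1}^2]}$, and then compute the second moment directly from the definition. Because $T_{i^*+1}$ is binomially distributed with parameters $m$ and $Q_{i^*+1}$, we have $\mathrm{Var}(\hat{Q}_{i^*+1}) = Q_{i^*+1}(1-Q_{i^*+1})/m$, and since $\mathbb{E}[D_{i^*+1}] = 0$,
\[
\mathbb{E}[D_{i^*+1}^2] \;=\; \frac{\mathrm{Var}(\hat{Q}_{i^*+1})}{Q_{i^*+1}^2} \;=\; \frac{1-Q_{i^*+1}}{m\,Q_{i^*+1}}.
\]
The map $x \mapsto (1-x)/x$ is decreasing on $(0,1]$, so the defining lower bound $Q_{i^*+1} \ge 1/e$ yields $\mathbb{E}[D_{i^*+1}^2] \le (e-1)/m \le e/m$. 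Plugging back into Doob's inequality produces $\sqrt{e/m}/\epsilon = \sqrt{e/(m\epsilon^2)}$, which is exactly the required bound.

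I do not anticipate any real obstacle; the only point requiring care is tracking the direction of the filtration so that Doob's inequality evaluates at $|D_{i^*+1}|$ rather than at some other index. This is in fact the fortunate choice, since $i^{*}+1$ is precisely the index whose lower bound on $Q$ is guaranteed by construction and makes the relative variance of $\hat{Q}$ uniformly controlled by an absolute constant.
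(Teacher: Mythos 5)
Your proposal is correct and follows essentially the same route as the paper: apply Doob's maximal inequality to the non-negative submartingale $|D_n|,\ldots,|D_{i^*+1}|$, bound $\mathbb{E}[|D_{i^*+1}|]$ by $\sqrt{\mathbb{E}[D_{i^*+1}^2]}$ via Jensen, compute the second moment from the binomial variance of $T_{i^*+1}$, and finish with $Q_{i^*+1}\ge 1/e$. The only cosmetic difference is that you retain the factor $1-Q_{i^*+1}$ slightly longer, giving the marginally sharper intermediate bound $(e-1)/m$ before relaxing to $e/m$.
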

\begin{proof}
    For each $i\in\{i^*+1,\dots,n\}$, the expected value $\mathbb{E}[\abs{D_i}]$ is at most
    \begin{align}
    \mathbb{E}\big[\abs{D_i}\big]
        &=\textstyle\mathbb{E}\left[\abs{\frac{Q_i-\hat{Q}_i}{Q_i}}\right]
        =\mathbb{E}\left[\abs{\frac{Q_i-T_i/m}{Q_i}}\right]
        =\frac{1}{mQ_i}\mathbb{E}\big[\abs{mQ_i-T_i}\big]
        =\frac{1}{mQ_i}\mathbb{E}\big[\sqrt{(mQ_i-T_i)^2}\big]\notag\\
        &\le\frac{1}{mQ_i}\sqrt{\mathbb{E}[(mQ_i-T_i)^2]}
        =\frac{1}{mQ_i}\sqrt{mQ_i(1-Q_i)}
        \le\sqrt{\frac{1}{mQ_i}}. \label{eq:abs}
    \end{align}
    Here, the first inequality follows from Jensen's inequality with the concave function $x\mapsto\sqrt{x}$. The last equality comes from the fact that the variance of $T_i$ is $mQ_i(1-Q_i)$, because $T_i$ follows a binomial distribution with parameters $m$ and $Q_i$. By incorporating this inequality \eqref{eq:abs} into Doob's inequality \eqref{eq:doob} with the non-negative submartingale $|D_n|,|D_{n-1}|\dots,|D_{i}|$, we obtain
    \begin{align*}
        \Pr[\max_{k=i}^nD_k\ge\epsilon]
        &\le \Pr[\max_{k=i}^n|D_k|\ge\epsilon]
        \le \frac{1}{\epsilon}\cdot\mathbb{E}\big[|D_i|\big]
        \le\sqrt{\frac{1}{m\epsilon^2Q_i}}.
    \end{align*}
    Specifically, by setting $i=i^*+1$, we have
    \begin{align*}
        \Pr[\max_{k=i^*+1}^nD_k\ge\epsilon]
        \le\sqrt{\frac{1}{m\epsilon^2 Q_{i^*+1}}}
        \le\sqrt{\frac{e}{m\epsilon^2}}
    \end{align*}
    The last inequality holds by $Q_{i^*+1}\ge 1/e$.
\end{proof}

Furthermore, we have the following lemma by Hoeffding's inequality~\cite{H94}.
\begin{restatable}{lemma}{uniontwo}
    $\Pr[\hat{Q}_{i^*}\ge\frac{1}{e}+\epsilon]\le\exp(-2\epsilon^2m)$ if $Q_1<1/e$.
    \label{lem:uniontwo}
\end{restatable}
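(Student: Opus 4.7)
The plan is to reduce the statement to a direct application of Hoeffding's inequality, after checking that the hypothesis $Q_1<1/e$ forces $Q_{i^*}<1/e$, which gives the needed gap between $Q_{i^*}$ and the threshold $1/e+\epsilon$.

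First I would observe that $Q_i=\prod_{k=i}^n(1-p_k)$ is non-decreasing in $i$, since $Q_{i+1}=Q_i/(1-p_i)\ge Q_i$. Combined with the definition of $i^*$ as the minimum index with $Q_{i^*+1}\ge 1/e$, this yields $Q_{i^*}<1/e$. Indeed, if $i^*\ge 2$, then by minimality the index $i=i^*-1$ fails the defining condition, so $Q_{i^*}<1/e$; and if $i^*=1$, then $Q_{i^*}=Q_1<1/e$ directly by the assumption of the lemma.

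Next I would invoke the structure of $\hat{Q}_{i^*}$. By construction $T_{i^*}=\sum_{j=1}^m \mathbf{1}[Y_{i^*,j}=\dots=Y_{n,j}=0]$ is a sum of $m$ i.i.d.\ Bernoulli random variables with success probability $Q_{i^*}$, and $\hat{Q}_{i^*}=T_{i^*}/m$ is their empirical mean. Since each summand is bounded in $[0,1]$, Hoeffding's inequality gives
\begin{align*}
\Pr\bigl[\hat{Q}_{i^*}-Q_{i^*}\ge t\bigr]\le\exp(-2mt^2)
\end{align*}
for every $t>0$.

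Finally, because $Q_{i^*}<1/e$, the event $\{\hat{Q}_{i^*}\ge 1/e+\epsilon\}$ implies $\hat{Q}_{i^*}-Q_{i^*}>\epsilon$, so choosing $t=\epsilon$ yields the desired bound $\Pr[\hat{Q}_{i^*}\ge 1/e+\epsilon]\le\exp(-2\epsilon^2 m)$. There is no real obstacle here: the only subtlety is the sign check $Q_{i^*}<1/e$, which is exactly what the hypothesis $Q_1<1/e$ is there to guarantee and without which the reduction to Hoeffding would be vacuous.
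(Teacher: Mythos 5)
Your proposal is correct and follows essentially the same route as the paper: establish $Q_{i^*}<1/e$ from the hypothesis $Q_1<1/e$ and the minimality of $i^*$, then apply Hoeffding's inequality to $\hat{Q}_{i^*}$ viewed as the empirical mean of $m$ i.i.d.\ Bernoulli$(Q_{i^*})$ indicators. Your treatment of the edge case $i^*=1$ versus $i^*\ge 2$ is slightly more explicit than the paper's, but the argument is the same.
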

\begin{proof}
    Note that by the assumption that $Q_1<1/e$, we have $Q_{i^*}<1/e$. We use Hoeffding's inequality~\cite{H94}, which says that for independent random variables $Z_1,Z_2,\ldots,Z_n$ such that $Z_i\in[a,b]~(\forall i\in[n])$ and for any positive real $t$, the following inequality holds:
    \begin{align*}
        \Pr[\frac{1}{n}\sum_{i=1}^n(Z_i-\mathbb{E}[Z_i])\ge t]\le\exp\left(-\frac{2nt^2}{(b-a)^2}\right).
    \end{align*}
    Specifically, the right-hand side of this inequality is $\exp(-2nt^2)$ when $a=0$ and $b=1$. Thus, we have
    \begin{align*}
        \Pr[\hat{Q}_{i^*}\ge\frac{1}{e}+\epsilon]&\le\Pr[\hat{Q}_{i^*}-\mathbb{E}[\hat{Q}_{i^*}]\ge\epsilon]
        \le\exp(-2\epsilon^2m),
    \end{align*}
    since $\hat{Q}_{i^*}$ can be interpreted as an average of $m$ Bernoulli random variables and $\mathbb{E}[\hat{Q}_{i^*}]=Q_{i^*}<1/e$.
\end{proof}

Next, we demonstrate that $\hat{i}$ satisfies the condition of Lemma~\ref{lem:winprob} if the events considered in Lemmas~\ref{lem:unionone} and~\ref{lem:uniontwo} occur.
\begin{restatable}{lemma}{boundPs}
    If $Q_1<1/e$, $\max_{k=i^*+1}^n D_k<\epsilon$ and $\hat{Q}_{i^*}< 1/e+\epsilon$, 
    then $Q_{\hat{i}+1}\ge 1/e$ and $Q_{\hat{i}}\le 1/e+3\epsilon$.
    \label{lem:boundPs}
\end{restatable}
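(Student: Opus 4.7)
The key structural observation is that both sequences $(Q_i)$ and $(\hat{Q}_i)$ are non-decreasing in $i$. For $Q_i=\prod_{k=i}^n(1-p_k)$ this is immediate, since each factor $1-p_k$ lies in $[0,1]$. For $\hat{Q}_i=T_i/m$ it holds because any sample sequence whose values vanish from index $i$ onward also vanishes from $i+1$ onward, so $T_i\le T_{i+1}$.

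Using this monotonicity, I first argue $\hat{i}\ge i^*$. If instead $\hat{i}\le i^*-1$, then $\hat{i}+1\le i^*$ and monotonicity of $\hat{Q}$ gives $\hat{Q}_{i^*}\ge \hat{Q}_{\hat{i}+1}\ge 1/e+\epsilon$, contradicting the hypothesis $\hat{Q}_{i^*}<1/e+\epsilon$. The first conclusion $Q_{\hat{i}+1}\ge 1/e$ then follows at once: $\hat{i}+1\ge i^*+1$ and $Q_{i^*+1}\ge 1/e$ by the very definition of $i^*$, combined with monotonicity of $Q$.

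For the bound $Q_{\hat{i}}\le 1/e+3\epsilon$, I split on whether $\hat{i}=i^*$ or $\hat{i}>i^*$. In the first case, $Q_{\hat{i}}=Q_{i^*}<1/e$: when $i^*=1$ this is the hypothesis $Q_1<1/e$, and when $i^*\ge 2$ it follows from the minimality of $i^*$ applied to the index $i^*-1$. In the second case, $\hat{i}-1$ lies in $[n]$ and is strictly less than $\hat{i}$, so the minimality defining $\hat{i}$ yields $\hat{Q}_{\hat{i}}<1/e+\epsilon$. Since $\hat{i}\ge i^*+1$, the hypothesis $\max_{k=i^*+1}^n D_k<\epsilon$ forces $D_{\hat{i}}<\epsilon$, which rearranges to $Q_{\hat{i}}<\hat{Q}_{\hat{i}}/(1-\epsilon)$. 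Combining these yields $Q_{\hat{i}}<(1/e+\epsilon)/(1-\epsilon)$, and a short algebraic manipulation using $\epsilon<1/2$ shows this is at most $1/e+3\epsilon$.

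The main delicacy is to respect the range in which each hypothesis has bite: the martingale event $\max_{k=i^*+1}^n D_k<\epsilon$ controls the relative error $D_k$ only for $k\ge i^*+1$, whereas in the range $k\le i^*$ the only usable information about $\hat{Q}_k$ is $\hat{Q}_{i^*}<1/e+\epsilon$ together with monotonicity. The case split above is designed precisely so that each sub-case invokes only the information available in its region of indices; once the monotonicity observation is in place, everything else is routine.
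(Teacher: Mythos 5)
Your proof is correct and follows essentially the same route as the paper's: establish monotonicity of $(Q_i)$ and $(\hat{Q}_i)$, deduce $\hat{i}\ge i^*$ from $\hat{Q}_{i^*}<1/e+\epsilon$ versus $\hat{Q}_{\hat{i}+1}\ge 1/e+\epsilon$, dispose of the case $\hat{i}=i^*$ via $Q_{i^*}<1/e$, and otherwise combine $D_{\hat{i}}<\epsilon$ with $\hat{Q}_{\hat{i}}<1/e+\epsilon$ and the bound $(1/e+\epsilon)/(1-\epsilon)<1/e+3\epsilon$ for $\epsilon<1/2$. The only (welcome) difference is that you spell out the justification for the monotonicity of $\hat{Q}_i$ via $T_i\le T_{i+1}$, which the paper asserts without comment.
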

\begin{proof}
    Suppose that $\max_{k=i^*+1}^n D_k<\epsilon$ and $\hat{Q}_{i^*}<1/e+\epsilon$. By definition, we have $Q_1\le Q_2\le\dots\le Q_n$ and $\hat{Q}_1\le \hat{Q}_2\le\dots\le \hat{Q}_n$. Since $\hat{Q}_{i^*}< 1/e+\epsilon$ and $\hat{Q}_{\hat{i}+1}\ge 1/e+\epsilon$, we have $\hat{i}\ge i^*$. Thus, we obtain $Q_{\hat{i}+1}\ge Q_{i^*+1}\ge 1/e$ by the definition of $i^*$.

    Next, we show that $Q_{\hat{i}}\le 1/e+3\epsilon$. Recall that $\hat{i}\ge i^*$. Since $Q_{i^*}< 1/e<1/e+3\epsilon$ by $Q_1<1/e$, we may assume that $\hat{i}\ge i^*+1$. As $(Q_{\hat{i}}-\hat{Q}_{\hat{i}})/Q_{\hat{i}}=D_{\hat{i}}\le \max_{k=i^*+1}^n D_k<\epsilon$, we have $Q_{\hat{i}}< \hat{Q}_{\hat{i}}/(1-\epsilon)$. Moreover, from $\hat{i}>i^*\ge 1$ and $\hat{i}\in\argmin\{i\in[n]\mid \hat{Q}_{i+1}\ge 1/e+\epsilon\}$, we have $\hat{Q}_{\hat{i}}<1/e+\epsilon$. Hence, we obtain
    \begin{align}
        Q_{\hat{i}}< \frac{\hat{Q}_{\hat{i}}}{1-\epsilon}<\frac{1/e+\epsilon}{1-\epsilon}<\left(\frac{1}{e}+\epsilon\right)(1+2\epsilon)=\frac{1}{e}+\left(\frac{2}{e}+1\right)\epsilon+2\epsilon^2<\frac{1}{e}+3\epsilon, \label{eq:Qhati-bound}
    \end{align}
    where the last two inequalities hold by $0<\epsilon<1/2$.
\end{proof}
Finally, we use the union bound to show that the probability of $\max_{k=1}^nD_k<\delta$ or $\hat{Q}_{i^*}<\frac{1}{e}+\delta$ occurring is small. Then, by combining this fact with Lemma~\ref{lem:winprob}, we prove Theorem~\ref{thm:mltsample}.
\begin{proof}[Proof of Theorem~\ref{thm:mltsample}]
We choose $\epsilon$ as the one that satisfies $m=e/\epsilon^4$, which leads to $\epsilon=O(1/\sqrt[4]{k})$. Since we perform an asymptotic analysis, we may assume that $0<\epsilon<1/2$. 

We first consider the case where $Q_1<1/e$. In this case, $Q_{i^*}<1/e$ by the definition of $i^*$. By Lemmas~\ref{lem:unionone} and \ref{lem:uniontwo}, $\max_{k=i^*+1}^n D_k\ge\epsilon$ or $\hat{Q}_{i^*}\ge1/e+\epsilon$ happens with probability at most $\sqrt{\frac{e}{m\epsilon^2}}+\exp(-2\epsilon^2m)$ by the union bound. Then, we have
\begin{align*}
    \sqrt{\frac{e}{m\epsilon^2}}+\exp(-2\epsilon^2m)
    &
    =\epsilon+\exp(-\frac{2e}{\epsilon^2})
    \le \epsilon+\frac{\epsilon^2}{2e}
    \le 2\epsilon,
\end{align*}
where the first inequality holds since $e^{-x}\le1/(1+x)\le1/x$ by $e^x\ge1+x$, and the last inequality holds by the assumption that $\epsilon<1/2$. Therefore, the probability that both $\max_{k=i^*+1}^n D_k<\epsilon$ and $\hat{Q}_{i^*}<1/e+\epsilon$ happen is at least $1-2\epsilon$. Under these conditions, we have $Q_{\hat{i}+1}\ge 1/e$ and $Q_{\hat{i}}\le 1/e+3\epsilon$ by Lemma~\ref{lem:boundPs}. Using such an index $\hat{i}$, the threshold policy wins with probability at least $1/e-3\epsilon$ by Lemma~\ref{lem:winprob}. Thus, the overall winning probability is 
\begin{align*}
(1-2\epsilon)\left(\frac{1}{e}-3\epsilon\right)\ge \frac{1}{e}-4\epsilon=\frac{1}{e}-O\left(\frac{1}{\sqrt[4]{m}}\right). 
\end{align*}

Next, suppose that $Q_1\ge 1/e$. In this case, we have $i^*=1$. Then, by Lemma~\ref{lem:unionone}, we have $\max_{k=2}^n D_k<\epsilon$ with probability at least $1-\sqrt{\frac{e}{m\epsilon^2}}=1-\epsilon$ by $m=e/\epsilon^4$. In what follows, we analyze the winning probability under the assumption that $\max_{k=2}^n D_k<\epsilon$. By the definition of $\hat{i}$, there are two cases where (i) $\hat{i}=1$ or (ii) $\hat{i}\ge 2$ and $\hat{Q}_{\hat{i}}<1/e+\epsilon$. If $\hat{i}=1$, the winning probability is $Q_1R$, which is at least $1/e$ if $R\ge 1$, and at least $R/e^R$ if $R<1$ by Lemma~\ref{lem:nosuccess}. If $\hat{i}\ge 2$ and $\hat{Q}_{\hat{i}}<1/e+\epsilon$, we have $(Q_{\hat{i}}-\hat{Q}_{\hat{i}})/Q_{\hat{i}}=D_{\hat{i}}<\epsilon$, and hence 
\begin{align*}
    Q_{\hat{i}}< \frac{\hat{Q}_{\hat{i}}}{1-\epsilon}<\frac{1/e+\epsilon}{1-\epsilon}<\left(\frac{1}{e}+\epsilon\right)(1+2\epsilon)=\frac{1}{e}+\left(\frac{2}{e}+1\right)\epsilon+2\epsilon^2<\frac{1}{e}+3\epsilon,
\end{align*}
where the last two inequalities hold by $0<\epsilon<1/2$. Thus, the winning probability is at least $1/e-3\epsilon$ by Lemma~\ref{lem:winprob}. In both cases (i) and (ii), the winning probability is at least $\OPT(R)-3\epsilon$. Further, as the assumption of $\max_{k=2}^n D_k<\epsilon$ happens with probability $1-\epsilon$, the overall winning probability is at least
\begin{align*}
(1-\epsilon)(\OPT(R)-3\epsilon)
&\ge \OPT(R)-4\epsilon
=\OPT(R)-O\left(\frac{1}{\sqrt[4]{m}}\right).
\end{align*}
\end{proof}

\section*{Acknowledgments}
This work was supported by JST SPRING Grant Number JPMJSP2108, JST ERATO Grant Number JPMJER2301, JST PRESTO Grant Number JPMJPR2122, JSPS KAKENHI Grant Number JP20K19739, and Value Exchange Engineering, a joint research project between Mercari, Inc.\ and the RIISE.
\bibliographystyle{abbrv}
\bibliography{main}
\end{document}